\newcommand{\sP}{\mathscr{P}}
\newcommand{\mZ}{\mathbb{Z}_n}
\newtheorem{thm}{Theorem}[section]
\newtheorem{cor}[thm]{Corollary}
\newtheorem{defn}[thm]{Definition}
\newtheorem{prob}{Problem}
\newtheorem{prop}[thm]{Proposition}
\newtheorem{rem}[thm]{Remark}
\def\ni{\noindent}
\title{\sc A Study on the Modular Sumset Labeling of Graphs}
\author{{\bf Sudev Naduvath}}
\affil{\small Centre for Studies in Discrete Mathematics\\ Vidya Academy of Science \& Technology \\ Thrissur, Kerala, India.\\ E-mail: sudevnk@gmail.com}
\date{}
\begin{document}
\maketitle

%\begin{center}
%	\textit{\small Dedicated to Prof. (Dr.) K. A. Germina on her retirement year.}
%\end{center}

\begin{abstract}
For a positive integer $n$, let $\mZ$ be the set of all non-negative integers modulo $n$ and $\sP(\mZ)$ be its power set. A modular sumset valuation or a modular sumset labeling of a given graph $G$ is an injective function $f:V(G) \to \sP(\mZ)$ such that the induced function $f^+:E(G) \to \sP(\mZ)$ defined by $f^+ (uv) = f(u)+ f(v)$. A modular sumset indexer of a graph $G$ is an injective modular sumset valued function $f:V(G) \to \sP(\mZ)$ such that the induced function $f^+:E(G) \to \sP(\mZ)$ is also injective. In this paper, some properties and characteristics of this type of modular sumset labeling of graphs are being studied.
\end{abstract}

\ni {\small \bf Key Words}: Modular sumset graphs; weak modular sumset graphs; strong modular sumset graphs, maximal modular sumset graphs exquisite modular sumset graphs; modular sumset number of a graph. 

\vspace{0.53mm}

\ni {\small \bf Mathematics Subject Classification}: 05C78.

\section{Introduction}

For all  terms and definitions, not defined specifically in this paper, we refer to \cite{BM1}, \cite{FH} and \cite{DBW}. For graph classes, we further refer to \cite{BLS}, \cite{JAG} and \cite{GCO} and for notions and results in number theory, we refer to \cite{TMA} and \cite{MBN}.  Unless mentioned otherwise, all graphs considered here are simple, finite and have no isolated vertices.

Let $A$ and $B$ be two sets integers. The {\em sumset} of $A$ and $B$ is denoted by $A+B$ and is defined as $A+B=\{a+b : a\in A, b\in B\}$. If either $A$ or $B$ is countably infinite, then $A+B$ is also countably infinite. We denote the cardinality of a set $A$ by $|A|$. Then, we have the following theorem on the cardinality of the sumset of two sets. 

\begin{thm}\label{T-CST}
\cite{MBN} For two non-empty sets $A$ and $B$, $|A|+|B|-1\le |A+B|\le |A|\,|B|$.
\end{thm}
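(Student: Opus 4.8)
The plan is to prove the two inequalities separately: the upper bound by a direct counting argument, and the lower bound by exhibiting an explicitly ordered chain of distinct sums inside $A+B$.

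For the upper bound $|A+B|\le |A|\,|B|$, I would observe that by definition every element of $A+B$ has the form $a+b$ with $a\in A$, $b\in B$, so the map $(a,b)\mapsto a+b$ is a surjection from $A\times B$ onto $A+B$. Since $|A\times B|=|A|\,|B|$, the image can contain at most $|A|\,|B|$ elements, giving the inequality.

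For the lower bound, I would write $A=\{a_1<a_2<\cdots<a_m\}$ and $B=\{b_1<b_2<\cdots<b_n\}$, where $m=|A|$ and $n=|B|$ (both at least $1$ by non-emptiness). Then I would consider the sequence that first fixes the least element of $A$ and runs through $B$ increasingly, then fixes the greatest element of $B$ and runs through $A$ increasingly:
\[
a_1+b_1<a_1+b_2<\cdots<a_1+b_n<a_2+b_n<\cdots<a_m+b_n .
\]
Each consecutive inequality is strict (the first block because the $b_j$ increase, the second block because the $a_i$ increase), so these terms are pairwise distinct; counting them gives $(n-1)+1+(m-1)=m+n-1$ distinct elements, all of which lie in $A+B$. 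Hence $|A+B|\ge m+n-1=|A|+|B|-1$.

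There is no substantial obstacle here; the only points requiring a little care are that the two blocks of the chain share exactly the single term $a_1+b_n$, which is why the count is $m+n-1$ rather than $m+n$, and that the non-emptiness hypothesis is precisely what makes the chain well defined by guaranteeing $m,n\ge 1$. Together the two parts establish $|A|+|B|-1\le |A+B|\le |A|\,|B|$.
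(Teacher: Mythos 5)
Your proof is correct. The paper states this result without proof, merely citing Nathanson \cite{MBN}, and your argument --- the surjection $(a,b)\mapsto a+b$ for the upper bound and the strictly increasing chain $a_1+b_1<\cdots<a_1+b_n<a_2+b_n<\cdots<a_m+b_n$ for the lower bound --- is precisely the standard proof given in that reference, with the overlap at $a_1+b_n$ correctly accounted for.
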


Another theorem on sumsets of two sets of integers proved in \cite{MBN} is given below.

\begin{thm}\label{T-SSAPL}
Let $A$ and $B$ be two non-empty sets of integers. Then, $|A+B|=|A|+|B|-1$ if and only if $A$ and $B$ are arithmetic progressions with the same common difference.
\end{thm}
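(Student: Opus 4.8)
The plan is to prove the two implications separately: the reverse implication by a direct computation, and the forward implication by induction on $|A|+|B|$, leaning on the lower bound of Theorem~\ref{T-CST}. For the ``if'' direction, suppose $A=\{a,a+d,\ldots,a+(k-1)d\}$ and $B=\{b,b+d,\ldots,b+(l-1)d\}$ with $k=|A|$, $l=|B|$ and the same common difference $d$. Then every element of $A+B$ has the form $a+b+id$ with $0\le i\le k+l-2$, and conversely each such value is attained; hence $|A+B|=k+l-1=|A|+|B|-1$. This confirms that arithmetic progressions with a common difference attain the lower bound of Theorem~\ref{T-CST}; the forward direction must show that these are the only configurations that do so.

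For the ``only if'' direction, I would first dispose of the degenerate cases $|A|=1$ or $|B|=1$, in which a one-element set is regarded as a trivial arithmetic progression, so the claim holds vacuously. Assume then $|A|=k\ge 2$ and $|B|=l\ge 2$, and write $A=\{a_1<\cdots<a_k\}$, $B=\{b_1<\cdots<b_l\}$. The strictly increasing chain
\[
a_1+b_1<a_1+b_2<\cdots<a_1+b_l<a_2+b_l<\cdots<a_k+b_l
\]
exhibits $k+l-1$ elements of $A+B$, so under the hypothesis $|A+B|=k+l-1$ this chain is \emph{all} of $A+B$. Now induct on $k+l$. In the base case $k=l=2$, every $2$-element set is an arithmetic progression, and a short computation with $A=\{a_1,a_2\}$, $B=\{b_1,b_2\}$ shows that $|A+B|=3$ forces $a_2-a_1=b_2-b_1$. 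For the inductive step, since $A+B=B+A$ we may assume $l=\max(k,l)\ge 3$. Put $B'=B\setminus\{b_l\}$ and $B''=B\setminus\{b_1\}$, so $|B'|=|B''|=l-1\ge 2$. Every element of $A+B'$ is at most $a_k+b_{l-1}<a_k+b_l$, so $a_k+b_l\in(A+B)\setminus(A+B')$; combining this with the bound $|A+B'|\ge k+(l-1)-1$ from Theorem~\ref{T-CST} and the hypothesis $|A+B|=k+l-1$ forces $|A+B'|=|A|+|B'|-1$, and symmetrically (using $a_1+b_1$) $|A+B''|=|A|+|B''|-1$.

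Applying the induction hypothesis to the pairs $(A,B')$ and $(A,B'')$, both $B'$ and $B''$ are arithmetic progressions whose common difference coincides with that of $A$; since $|A|\ge 2$, the common difference of $A$ is the well-defined number $a_2-a_1=:d$, so $B'$ and $B''$ are both arithmetic progressions with difference $d$. From $B'=\{b_1,\ldots,b_{l-1}\}$ we get $b_{i+1}-b_i=d$ for $1\le i\le l-2$, and from $B''=\{b_2,\ldots,b_l\}$ we get $b_{i+1}-b_i=d$ for $2\le i\le l-1$; together these give $b_{i+1}-b_i=d$ for all $1\le i\le l-1$, so $B$, like $A$, is an arithmetic progression with common difference $d$, completing the induction.

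The main obstacle I anticipate is the bookkeeping in the inductive step: verifying cleanly that deleting an extreme point of $B$ drops the sumset cardinality by exactly one (so that the induction hypothesis genuinely applies), and then correctly ``gluing'' the two overlapping sub-progressions $B'$ and $B''$ into a single progression $B$. The low-cardinality and one-element degenerate cases also need a little separate care so that the statement is interpreted correctly there.
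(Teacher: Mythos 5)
The paper does not actually prove this statement: it is imported verbatim from Nathanson's book \cite{MBN} and used as a black box, so there is no in-paper argument to compare yours against. Judged on its own, your induction is correct for the substantive case $|A|,|B|\ge 2$ and is essentially the standard textbook proof: the reverse direction is a direct count; the chain $a_1+b_1<\cdots<a_1+b_l<a_2+b_l<\cdots<a_k+b_l$ shows the lower bound is attained exactly by those $k+l-1$ sums; the base case $k=l=2$ forces $a_1+b_2=a_2+b_1$; in the inductive step the extreme sums $a_k+b_l$ and $a_1+b_1$ genuinely witness that deleting $b_l$ (resp.\ $b_1$) drops the sumset cardinality by exactly one, so Theorem~\ref{T-CST} squeezes $|A+B'|=|A|+|B'|-1$ and the induction hypothesis applies; and the gluing of $B'$ and $B''$ along their overlap $\{b_2,\dots,b_{l-1}\}$ (nonempty since $l\ge 3$) is sound.

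The one place you go wrong is the degenerate case. If $|A|=1$, the identity $|A+B|=|A|+|B|-1$ holds for \emph{every} finite $B$, including sets that are not arithmetic progressions (e.g.\ $A=\{0\}$, $B=\{0,1,3\}$ gives $|A+B|=3=1+3-1$), so the forward implication is false there; it does not ``hold vacuously,'' and declaring singletons to be trivial progressions does not make $B$ a progression. This is really a defect of the statement as transcribed in the paper --- Nathanson's theorem carries the hypothesis $|A|,|B|\ge 2$ --- but your write-up should say explicitly that the hypothesis is needed (and that the claim fails without it) rather than assert the case is vacuous. With that hypothesis restored, your proof is complete.
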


Using the concepts of the sumset of two sets, the notions of an integer additive set-labeling and an integer additive set-indexer of a given graph $G$ was introduced as follows.

\begin{defn}{\rm
Let $\mathbb{N}_0$ be the set of all non-negative integers and let $\sP(\mathbb{N}_0)$ be its power set. An {\em integer additive set-labeling} of a graph $G$ is an injective function $f:V(G)\to \sP(\mathbb{N}_0)$, where the induced function $f^+(uv):E(G)\to \sP(\mathbb{N}_0)$ is defined by $f^+(uv)=f(u)+f(v)$. A graph $G$ which admits an integer additive set-labeling is called an {\em integer additive set-labeled graph} or {\em integer additive set-valued graph}.}
\end{defn}

\begin{defn}{\rm
\cite{GA} An {\em integer additive set-indexer} (IASI) is  as an injective function $f:V(G)\to \sP(\mathbb{N}_0)$ such that the induced function $f^+:E(G) \to \sP(\mathbb{N}_0)$ defined by $f^+ (uv) = f(u)+ f(v)=\{a+b: a \in f(u), b \in f(v)\}$ is also injective. A graph $G$ which admits an IASI is called an {\em integer additive set-indexed graph}.}
\end{defn}

Certain studies about integer additive set-indexers of graphs have been initiated in \cite{GA}, \cite{GS1}, \cite{GS0} and \cite{GS2}. A series of studies about different types of integer additive set-labeled graphs followed thereafter. These papers about the characteristics and properties of integer additive set-valued graphs are the main motivations behind this paper.   

\section{Modular Sumset Labeling of Graphs}

Let $n$ be a positive integer. We denote the set of all non-negative integers modulo $n$ by $\mZ$ and its power set by $\sP(\mZ)$. The {\em modular sumset} of the two subsets $A$ and $B$ of $\mZ$, denoted by $A+B$, is the set defined by $A+B=\{x: a+b\equiv x\;(mod\; n), a\in A, b\in B\}$. Through out our discussion, $A+B$ is the sumset of $A$ and $B$.  It can also be noted that $A,B \subseteq \mZ \implies A+B \subseteq \mZ$. 

Then, using the concepts of modular sumsets of sets and analogous to the definition of integer additive set-labelings of graphs, let us now define the following notions. 

\begin{defn}{\rm
A function $f:V(G)\to \sP(\mZ)$, whose induced function $f^+(uv):E(G)\to \sP(\mZ)$ is defined by $f^+(uv)=f(u)+f(v)$, is said to be a {\em modular sumset labeling} if $f$ is injective. A graph $G$ which admits a modular sumset labeling is called an {\em modular sumset graph}.}
\end{defn}

\begin{defn}{\rm
A {\em modular sumset indexer} is  an injective function $f:V(G)\to \sP(\mZ)$ such that the induced function $f^+:E(G) \to \sP(\mZ)$ defined by $f^+ (uv) = f(u)+ f(v)=\{x: a+b\equiv x\;(mod\;n); a \in f(u), b \in f(v)\}$ is also injective.}
\end{defn}

In View of Theorem \ref{T-CST}, the bounds for the cardinality of an edge of a modular sumset graph $G$ is given in the following theorem.

\begin{thm}\label{T-CST2}
Let $f:V(G)\to \sP(\mZ)$ be a modular sumset labeling of a given graph $G$. Then, for any edge $uv\in E(G)$, we have $|f(u)|+|f(v)|-1\le |f^+(uv)|=|f(u)+f(v)|\le |f(u)|\,|f(v)|\le n$.
\end{thm}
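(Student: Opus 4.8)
The plan is to fix an edge $uv \in E(G)$, set $A = f(u)$ and $B = f(v)$, and establish the four-term chain one link at a time, drawing the two inner comparisons from Theorem~\ref{T-CST} and the two appearances of $n$ from the fact that every label is a subset of the $n$-element set $\mZ$. The starting observation is that, by the definition of the induced labeling, $f^+(uv) = A + B = \{x : a + b \equiv x \pmod{n},\ a \in A,\ b \in B\}$ is itself a subset of $\mZ$, so $|f^+(uv)| \le |\mZ| = n$ is automatic.

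For the two inner inequalities $|A| + |B| - 1 \le |A+B| \le |A|\,|B|$ I would appeal to Theorem~\ref{T-CST}. The right-hand bound transfers to the modular setting without change, since $A+B$ is the image of the Cartesian product $A \times B$ under the map $(a,b) \mapsto (a+b) \bmod n$, whence $|A+B| \le |A \times B| = |A|\,|B|$. The left-hand bound is more delicate, because reduction modulo $n$ can merge residues but never split them, so the integer-sumset estimate of Theorem~\ref{T-CST} does not descend to $\mZ$ for free. Here I would pass to integer representatives, apply Theorem~\ref{T-CST} to their ordinary sumset, and keep careful track of exactly which coincidences the reduction can introduce; I expect verifying that no such collapse pushes $|A+B|$ below $|A| + |B| - 1$ to be a secondary technical point.

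The genuinely hard link, and the step I expect to be the main obstacle, is the final inequality $|A|\,|B| \le n$. The facts in hand give only $|A+B| \le |A|\,|B|$ and $|A+B| \le n$, and a naive combination of these yields nothing stronger than $|A+B| \le \min\{|A|\,|B|,\,n\}$; reaching the asserted $|A|\,|B| \le n$ requires controlling the product $|A|\,|B|$ directly against the ambient modulus. My approach would be to show that under the labeling the assignment $A \times B \to \mZ$, $(a,b) \mapsto (a+b)\bmod n$, is forced to be (essentially) injective, so that $|A|\,|B| = |A+B| \le n$; concentrating the argument on why distinct ordered pairs must yield distinct residues in $\mZ$ is where I would invest the bulk of the work, and I regard closing the gap between the easy $\min$-bound and the stated product bound as the crux of the theorem.
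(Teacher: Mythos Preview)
Your instincts about the two problematic links are sharper than the paper's own treatment, which gives no argument beyond citing Theorem~\ref{T-CST}. But the approaches you sketch for closing them cannot succeed, because both inequalities are false in $\mZ$ without further hypotheses. For the final link $|A|\,|B|\le n$: take $n=6$, $A=\{0,1,2\}$, $B=\{0,3,4\}$. These are distinct subsets of $\mathbb{Z}_6$, so a modular sumset labeling may assign them to adjacent vertices, yet $|A|\,|B|=9>6$. Your plan of forcing $(a,b)\mapsto a+b\pmod n$ to be injective on $A\times B$ already fails here (e.g.\ $0+4=1+3$); in fact that injectivity is exactly the \emph{strong} condition isolated later in Section~3.3, not a consequence of an arbitrary modular sumset labeling. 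Nothing in the definition bounds $|f(u)|\,|f(v)|$ beyond the trivial $|f(u)|,|f(v)|\le n$.

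The lower bound is just as fragile: with $n=5$, $A=\{0,1,2,3\}$, $B=\{0,1,2\}$ one gets $A+B=\mathbb{Z}_5$, so $|A+B|=5<6=|A|+|B|-1$. Reduction modulo $n$ can collapse the integer sumset below $|A|+|B|-1$ whenever the unreduced sumset wraps around, so Theorem~\ref{T-CST} does not descend to $\mZ$; the correct replacement (for prime $n$) is Cauchy--Davenport, $|A+B|\ge\min\{n,\,|A|+|B|-1\}$. In short, you located the trouble precisely, but the gap lies in the statement itself rather than in your strategy: as printed, the chain is not provable, and the honest version would bound $|A+B|$ (not $|A|\,|B|$) by $n$ and impose $|A|+|B|\le n+1$ for the left inequality.
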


\ni We use the following terms and definitions analogous to those for IASL-graphs. The cardinality of the set-label of an element of $G$ is said to be the {\em set-labeling number} of that element. If all the vertices of a graph $G$ have the same set-labeling number, say $l$, then we say that $V(G)$ is {\em $l$-uniformly set-labeled}. A modular sumset labeling of $G$ is said to be a {\em $k$-uniform modular sumset labeling} if the set-labeling number of all edges of $G$ have the same set-labeling number $k$.

The first and most important problem in this context is to verify whether a given graph admits a modular sumset labeling. The existence of a modular sumset labeling (and a modular sumset indexer) for any given graph $G$, is established in the following theorem.

\begin{thm}
Every finite graph admits a modular sumset labeling (or a modular sumset indexer). 
\end{thm}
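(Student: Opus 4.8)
The plan is to exhibit an explicit injective vertex labeling for which the induced edge function is automatically well-defined, and then to upgrade it so that the edge function is injective as well. Let $G$ be a finite graph with vertex set $V(G) = \{v_1, v_2, \ldots, v_p\}$. First I would choose the modulus $n$ large enough to accommodate a labeling: since $\sP(\mathbb{Z}_n)$ has $2^n$ elements, any $n$ with $2^n \ge p$ suffices to give an injection $f : V(G) \to \sP(\mathbb{Z}_n)$, and for the indexer version I will ask for a bit more room. The simplest approach is to use singleton set-labels: take $n \ge p$ and set $f(v_i) = \{i-1\}$ for $i = 1, \ldots, p$, so that $f(v_i) \subseteq \mathbb{Z}_n$ and $f$ is injective because the residues $0, 1, \ldots, p-1$ are distinct modulo $n$. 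Then for every edge $v_iv_j$ the induced label $f^+(v_iv_j) = \{i-1\} + \{j-1\} = \{(i+j-2) \bmod n\}$ is a well-defined singleton subset of $\mathbb{Z}_n$, so $f$ is a modular sumset labeling of $G$. This already proves the first assertion.

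For the indexer (injective $f^+$) I would refine the construction so that distinct edges receive distinct singleton labels. The clean way is to spread the vertex labels out multiplicatively: pick $n$ large enough and assign $f(v_i) = \{2^{\,i-1} \bmod n\}$, or more robustly $f(v_i) = \{a_i\}$ where the $a_i$ are chosen so that all pairwise sums $a_i + a_j$ with $i < j$ are distinct — i.e., $\{a_1, \ldots, a_p\}$ is a Sidon set (a $B_2$-set) — and so that moreover these sums are distinct \emph{modulo} $n$, which holds as soon as $n$ exceeds twice the largest element. A Sidon set of size $p$ inside $\{0, 1, \ldots, m\}$ exists for $m$ polynomial in $p$ (indeed $m = O(p^2)$ by the greedy bound, and better by classical constructions), so taking $n > 2m$ guarantees that $f$ is injective, that each $f^+(v_iv_j)$ is a genuine singleton subset of $\mathbb{Z}_n$, and that $f^+(v_iv_j) = \{(a_i + a_j) \bmod n\}$ are pairwise distinct over all edges. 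Hence $f$ is a modular sumset indexer.

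The key steps, in order, are: (1) bound the required modulus $n$ in terms of $p = |V(G)|$; (2) define the singleton vertex labeling and verify $f$ is injective and that every induced edge label lies in $\sP(\mathbb{Z}_n)$, giving the labeling; (3) replace the labels by (the residues of) a Sidon set and enlarge $n$ past twice its diameter so that pairwise sums remain distinct after reduction mod $n$, giving the indexer. The only real content — the rest is bookkeeping — is step (3): one must be sure that passing to residues modulo $n$ does not collapse two distinct edge sums, and the fix is simply to choose $n$ larger than any sum that can occur, so no genuine obstacle remains. (A purely existential alternative avoids Sidon sets entirely: for the indexer it is enough that $G$ has finitely many edges, so one can take $n$ huge and assign the vertices generic singletons one at a time, at each step avoiding the finitely many "bad" values that would create a repeated vertex label or a repeated edge sum; a counting argument shows a good choice always exists.)
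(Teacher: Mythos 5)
Your proposal is correct, and it is in fact more explicit and more rigorous than the paper's own argument. The paper simply picks $p$ arbitrary distinct non-empty subsets $A_1,\ldots,A_p$ of $\mathbb{Z}_n$, assigns $f(v_i)=A_i$, and asserts that ``for suitable choices of $n$'' this is a modular sumset labeling and that ``taking $n$ sufficiently large'' makes $f^+$ injective; no mechanism for the injectivity of $f^+$ is given, and indeed for an arbitrary fixed family of subsets this is not automatic. You close exactly that gap: singleton labels $\{i-1\}$ with $n\ge p$ give the labeling immediately, and for the indexer you take the singletons from a Sidon set and choose $n$ larger than twice its largest element, so that distinct pairwise sums remain distinct after reduction modulo $n$ and $f^+$ is injective on edges. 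Your parenthetical greedy alternative (avoiding finitely many bad values at each step) is essentially the counting argument the paper is implicitly relying on but never states. The only caveat worth recording is that your construction deliberately sacrifices generality of the set-labels for concreteness -- it only ever uses singletons -- whereas the paper's (incomplete) argument gestures at the fact that any injective assignment of subsets already yields a labeling; but since the theorem is purely an existence statement, your construction proves it in full.
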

\begin{proof}
Let $G$ be a graph with vertex set $V(G) = \{v_1, v_2, \ldots, v_p\}$.. Let $A_1,A_2\ldots,A_p$ be $p$ distinct non-empty subsets of $\mZ$, $n$ being a positive integer. Let $f : V (G) \to \sP(\mZ)$ defined by $f(v_i) = A_i$. Clearly $f$ is injective. Define $f^+ : E(G) \to \sP(\mZ)$ by $f^+(v_iv_j) = \{x: a_i +b_j\equiv x\;(mod\;n); a_i \in A_i; b_j \in A_j\}$. Clearly, $f^+(v_iv_j)$ is also a set of non-negative integers modulo $n$ and hence is a subset of $\mZ$. Therefore, for suitable choices of $n$, $f$ is a modular sumset labeling of $G$.

For taking $n$ sufficiently large, we can make $f^+$ also an injective function, which establishes the existence of a modular sumset indexer for $G$. 
\end{proof}

Figure \ref{G-SSLG} depicts the existence of a modular sumset labeling for the Petersen Graph, with respect to the ground set $\mathbb{Z}_4$ and Figure \ref{G-SSIG} illustrates the existence of a modular sumset indexer for the Petersen Graph, with respect to the ground set $\mathbb{Z}_5$.

\begin{figure}[h!]
\centering
\begin{subfigure}[b]{0.45\textwidth}
\includegraphics[width=\textwidth]{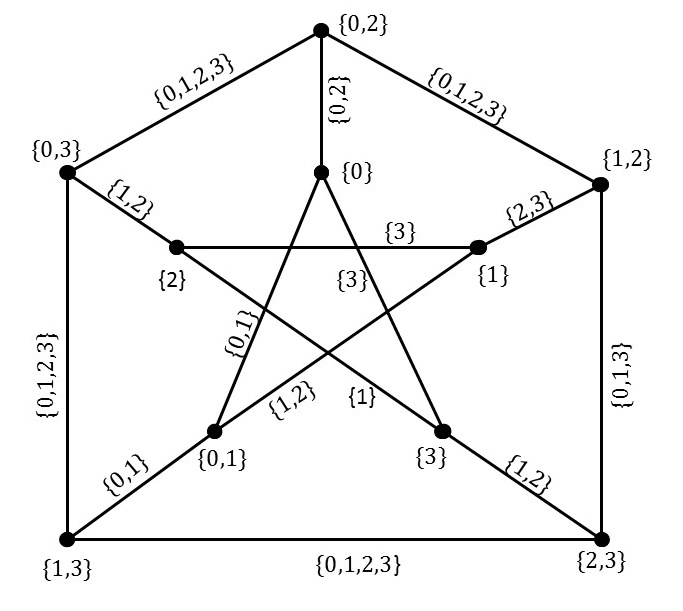}
\caption{\small Petersen Graph together with a sumset valuation defined on it}
\label{G-SSLG}
\end{subfigure}
\qquad
\begin{subfigure}[b]{0.45\textwidth}
\includegraphics[width=\textwidth]{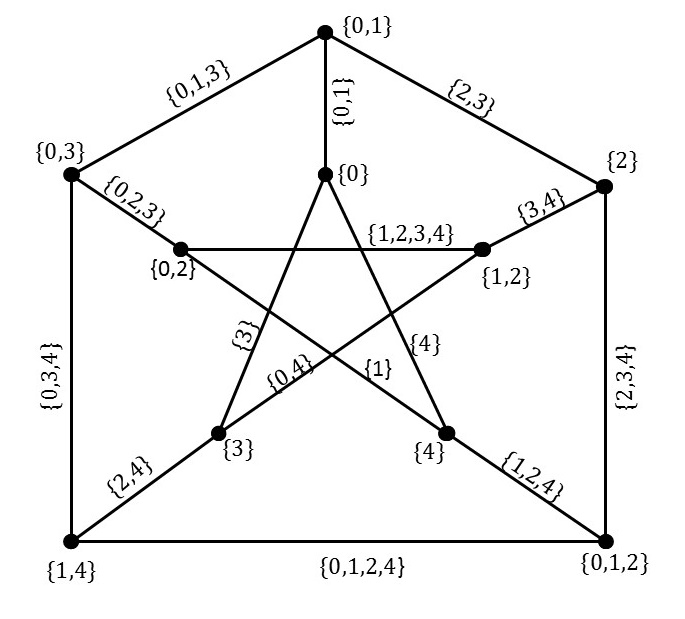}
\caption{\small Petersen Graph together with a modular sumset indexer defined on it}
\label{G-SSIG}
\end{subfigure}
        \caption{}\label{}
\end{figure}

As in the case of all other set-valuations of graphs, the problem of finding the minimum cardinality required for the ground set $\mZ$ with respect to which a given graph admits a modular sumset labeling is relevant and arouses much interest. In view of this, let us introduce the following notion.

\begin{defn}{\rm
The smallest value of $n$ such that $f:V(G)\to \sP(\mZ)$ is a modular sumset labeling of a given graph $G$ is called the \textit{modular sumset number} of $G$. The modular sumset number of a graph $G$ is denoted by $\sigma(G)$. Similarly, the minimum value of $n$ such that $f:V(G)\to \sP(\mZ)$ is a modular sumset indexer of $G$ is called the \textit{sumset index} of $G$ and is denoted by $\bar{\sigma}(G)$.}
\end{defn}

\ni The following theorem establishes the modular sumset number of a given graph on $m$ vertices.

\begin{thm}
The modular sumset number of a graph $G$ is $1+\lfloor \log_2 m\rfloor$.
\end{thm}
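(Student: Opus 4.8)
The plan is to reduce the computation of $\sigma(G)$ to a pure counting problem. A modular sumset labeling of $G$ is nothing more than an injective map $f:V(G)\to\sP(\mZ)$: the induced edge map $f^+(uv)=f(u)+f(v)$ is automatically well defined, since $A,B\subseteq\mZ$ forces $A+B\subseteq\mZ$, and no injectivity of $f^+$ is demanded (that extra requirement defines $\bar{\sigma}(G)$, not $\sigma(G)$). Following the convention used in the existence theorem above, vertex labels are taken to be non-empty subsets of $\mZ$, of which there are exactly $2^n-1$. Hence, writing $m=|V(G)|$, the graph $G$ admits a modular sumset labeling with ground set $\mZ$ if and only if $2^n-1\ge m$.

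For the lower bound, suppose $f:V(G)\to\sP(\mZ)$ is a modular sumset labeling. Since $f$ is injective and its image consists of non-empty subsets of $\mZ$, we get $m\le 2^n-1$, that is, $2^n\ge m+1$, so $n\ge\log_2(m+1)$ and therefore $n\ge\lceil\log_2(m+1)\rceil$. Taking the smallest admissible $n$ yields $\sigma(G)\ge\lceil\log_2(m+1)\rceil$.

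For the matching upper bound, put $n_0=\lceil\log_2(m+1)\rceil$, so that $2^{n_0}\ge m+1$ and hence $\mathbb{Z}_{n_0}$ has at least $m$ non-empty subsets. Pick $m$ of them, say $A_1,A_2,\ldots,A_m$, enumerate $V(G)=\{v_1,\ldots,v_m\}$, and set $f(v_i)=A_i$. Then $f:V(G)\to\sP(\mathbb{Z}_{n_0})$ is injective, so by the first paragraph it is a modular sumset labeling, giving $\sigma(G)\le n_0=\lceil\log_2(m+1)\rceil$. Combining the two bounds, $\sigma(G)=\lceil\log_2(m+1)\rceil$.

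It remains to rewrite this in the stated form via the elementary identity $\lceil\log_2(m+1)\rceil=1+\lfloor\log_2 m\rfloor$, valid for every positive integer $m$. Indeed, writing $k=\lfloor\log_2 m\rfloor$ we have $2^k\le m<2^{k+1}$, hence $2^k<m+1\le 2^{k+1}$, hence $k<\log_2(m+1)\le k+1$, so $\lceil\log_2(m+1)\rceil=k+1=1+\lfloor\log_2 m\rfloor$. There is essentially no obstacle in this argument; the one point that needs care is the non-emptiness convention on vertex labels, since permitting the empty set as a label would change the answer to $\lceil\log_2 m\rceil$ precisely when $m$ is a power of $2$, and I would flag this explicitly.
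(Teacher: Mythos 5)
Your proof is correct and rests on the same counting idea as the paper's (the ground set must supply at least $m$ distinct non-empty subsets, i.e.\ $2^n-1\ge m$). It is in fact more careful than the paper's own argument, which passes from $2^n-1\ge m$ to the erroneous intermediate inequality $n\ge 1+\log_2 m$ and then asserts the result without the attainability construction or the identity $\lceil\log_2(m+1)\rceil=1+\lfloor\log_2 m\rfloor$; your version supplies all three, and your flag about the non-emptiness convention is precisely the point on which the stated formula depends.
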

\begin{proof}
Let $f:V(G)\to \sP(\mZ)$ be a modular sumset labeling of a given graph $G$ on $m$ vertices. Then each vertex of $G$ has a non-empty subset of $\mZ$ as its set-label. Therefore, the ground set $\mZ$ must have at least $m$ non-empty subsets. That is, $2^n-1\ge m$. Hence, $n\ge 1+\log_2m$. Therefore, $\sigma(G)=1+\lfloor \log_2 m\rfloor$.
\end{proof}

\section{Certain Types Modular Sumset Graphs}

In this section, we study certain types of modular sumset labelings of given graphs according to the nature of the set-labels of the elements of those graphs. 

Analogous to the studies of integer additive set-labeled graphs, let us now proceed to study different types of modular sumset labelings of $G$ in terms of the cardinality of the set-labels of vertices and edges of $G$. 

\subsection{Weak Modular Sumset Graphs}

\ni As in the study of weak IASL graphs, our first aim is to check the existence of edges in modular sumset graph which has the same set-labeling number as that of one or both of its end vertices. The following result establishes the condition for a sumset to have the same cardinality of one or both of its summands. 

\begin{prop}
Let $A$ and $B$ be two non-empty subsets of $\mZ$. Then, $|A+B|=|A|$ (or $|A+B|=|B|$) if and only if either $|A|=|B|=\mZ$ or $|B|=1$ (or $|A|=1$). More over, $|A+B|=|A|=|B|$ if and only if $|A|=|B|=\mZ$ or $|A|=|B|=\mZ$.
\end{prop}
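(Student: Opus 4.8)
The plan is to prove the equivalence ``$|A+B|=|A|$ iff $A=B=\mZ$ or $|B|=1$'' in full; the companion statement for $|A+B|=|B|$ then follows word for word after interchanging the roles of $A$ and $B$, and the concluding ``moreover'' part by imposing both equalities simultaneously and intersecting the two resulting conditions.

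I would handle the sufficiency direction by direct inspection. If $|B|=1$, say $B=\{b\}$, then the map $x\mapsto x+b\pmod n$ is a permutation of $\mZ$ carrying $A$ bijectively onto $A+B$, so $|A+B|=|A|$. If $A=B=\mZ$, then for any fixed $a\in A$ and every $x\in\mZ$ we can write $x\equiv a+(x-a)\pmod n$ with $x-a\in B$; hence $A+B=\mZ$ and $|A+B|=n=|\mZ|=|A|$. So both listed conditions are sufficient, and the substance lies in the converse.

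For necessity, assume $|A+B|=|A|$. The main tool is Theorem~\ref{T-CST}: from $|A|+|B|-1\le|A+B|=|A|$ one gets $|B|\le 1$, hence $|B|=1$ since $B\ne\emptyset$ --- \emph{except} in the one genuinely modular situation in which this cardinality estimate is slack, namely when the sum wraps around and exhausts all of $\mZ$. I would therefore split into two cases. If $A+B\subsetneq\mZ$, the bound applies unconditionally and forces $|B|=1$. If $A+B=\mZ$, then $|A|=|A+B|=n$, so $A=\mZ$; to see what is then forced on $B$, fix $b_0\in B$ and observe that $A+b_0\subseteq A+B$ together with $|A+b_0|=|A|=|A+B|$ gives $A+b=A+b_0$ for every $b\in B$, so $A$ is invariant under translation by each difference $b-b_0$, and analysing how these periods lie inside $\mZ$ finishes the case.

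The step I expect to be the main obstacle is precisely this saturated case $A+B=\mZ$: here one cannot lean on the clean lower bound of Theorem~\ref{T-CST}, and must instead argue through translation-invariance of the sets and a careful accounting of periods modulo $n$. Once that case is in hand, the companion equivalence and the ``moreover'' statement follow formally, completing the proposition.
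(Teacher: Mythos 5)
The paper states this proposition without any proof, so there is no argument of the author's to compare yours against; but your attempt contains a genuine gap, and in fact the necessity direction you set out to prove is false. The critical error is the claim that the lower bound $|A|+|B|-1\le |A+B|$ of Theorem~\ref{T-CST} ``applies unconditionally'' whenever $A+B\subsetneq\mZ$. That inequality is proved in \cite{MBN} for sumsets of \emph{integers}; for modular sumsets it fails whenever $A+B$ is periodic under a proper nontrivial subgroup of $\mZ$, which does not require $A+B=\mZ$. Take $n=4$ and $A=B=\{0,2\}$: then $A+B=\{0,2\}\subsetneq \mathbb{Z}_4$ and $|A+B|=2=|A|=|B|$, although $|B|\ne 1$ and $A\ne\mathbb{Z}_4$. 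So your first case does not force $|B|=1$, and this single example already refutes both the main equivalence and the ``moreover'' clause of the proposition. Your saturated case has the same defect in another guise: once $A+B=\mZ$ forces $A=\mZ$, the set $A$ is invariant under \emph{every} translation, so the periods $b-b_0$ impose no condition on $B$ whatsoever; indeed $A=\mZ$ together with an arbitrary non-empty $B$ gives $|A+B|=n=|A|$, so the ``careful accounting of periods'' you defer to cannot deliver $B=\mZ$, because that conclusion is simply not true.

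Your translation argument in the saturated case is nevertheless the right tool, and pushing it through in general yields the correct statement, which is strictly weaker than the proposition: fixing $b_0\in B$, the inclusion $A+b_0\subseteq A+B$ with $|A+b_0|=|A|=|A+B|$ gives $A+b=A+B$ for every $b\in B$, hence $B$ lies in a single coset of the stabilizer $H=\{g\in\mZ : g+(A+B)=A+B\}$ and $A$ is a union of cosets of $H$. Conversely any such pair $(A,B)$ satisfies $|A+B|=|A|$. The two conditions listed in the proposition are only the degenerate cases $H=\mZ$ and $H=\{0\}$; for composite $n$ there are intermediate subgroups, and these produce the counterexamples above. So the obstacle you flagged is real, but it cannot be overcome: the statement needs to be corrected (e.g.\ restricted to $n$ prime, where Cauchy--Davenport restores the lower bound away from the saturated case, or reformulated in terms of the stabilizer) before any proof can succeed.
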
 

In view of the above proposition, it can be noted that the set-labeling number of an edge of $G$ is equal to that of an end vertex of that edge if and only if the set-labeling number of the other end vertex is $1$. Hence, analogous to the corresponding notion of IASL-graphs (see \cite{GS1,GS0}), we have the following definition. 

\begin{defn}{\rm
A modular sumset labeling $f$ of a graph $G$ is said to be a \textit{weak modular sumset labeling} of $G$ if the set-labeling number of every edge of $G$ is equal to the set-labeling number of at least one of its end vertices. A graph $G$ which admits a weak modular sumset labeling is called a {\em weak modular sumset graph}.}
\end{defn} 

It is to be noted that for a weak modular sumset graph, no two adjacent vertices can have non-singleton set-labels.

\begin{rem}{\rm 
The elements of $G$ having the set-labeling number $1$ are called the \textit{monocardinal elements} of $G$. An edge can be monocardinal if and only if its end vertices are monocardinal. The set-labeling number of an edge of a given graph $G$ is equal to the set-labeling number of  both of its end vertices if and only if the edge and its end vertices are monocardinal}
\end{rem} 

Hence, analogous to the corresponding of weak IASL-graphs the following results  are valid for weak modular sumset graphs.

\begin{thm}\label{T-wSL-1}
A graph $G$ admits a weak modular sumset labeling if and only if $G$ is bipartite or contains monocardinal edges. 
\end{thm}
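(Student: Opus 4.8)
The plan is to prove both directions of the equivalence, using the characterization of weak modular sumset labelings together with the structural property noted in the remarks: in a weak modular sumset graph, no two adjacent vertices may both carry non-singleton set-labels, equivalently, the vertices with non-singleton set-labels form an independent set, while the remaining vertices (the monocardinal ones) induce a subgraph all of whose edges are monocardinal. So the first step is to recast the condition ``$G$ admits a weak modular sumset labeling'' in purely combinatorial terms: there must exist a partition $V(G) = V_1 \cup V_2$ where $V_1$ is the set of monocardinal vertices and $V_2$ is the set of non-monocardinal vertices, with $V_2$ independent, such that one can actually assign distinct singletons to $V_1$ and distinct larger subsets of $\mZ$ to $V_2$ while keeping $f$ injective. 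The choice of $n$ is free (any positive integer), so by Theorem~\ref{T-CST2} and the observation in the existence theorem, injectivity is never an obstruction provided $n$ is taken large enough; thus the combinatorial condition reduces simply to: \emph{$G$ has an independent set $V_2$ whose removal leaves a graph that can be edge-realized by monocardinal labels}, and since any graph at all admits a monocardinal (i.e.\ ordinary vertex-distinct singleton) assignment, the only real requirement is that $V_2$ be independent.

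For the ``if'' direction, I would split into the two cases of the disjunction. If $G$ is bipartite with parts $X$ and $Y$, take $V_2 = X$ (an independent set): assign distinct singletons to the vertices of $Y$ and distinct non-singleton subsets of $\mZ$ to the vertices of $X$, choosing $n$ large enough that all these sets exist and are distinct; every edge then joins a singleton-labeled vertex to another vertex, so its set-label has the same cardinality as the $X$-endpoint's label, giving a weak modular sumset labeling. Alternatively, if $G$ is not bipartite but contains a monocardinal edge --- here I read this as: $G$ admits a weak modular sumset labeling in which some edge is monocardinal, or more usefully, $G$ contains an edge $uv$ such that $G$ can be handled by letting that edge and possibly others be monocardinal --- the argument is that we assign singletons to enough vertices (in particular to both ends of every odd cycle's ``defect'') to break all odd cycles, i.e.\ we pick $V_2$ to be any independent set whose complement... . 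Actually the cleaner formulation: $G$ contains monocardinal edges in a weak labeling precisely when not all of $V(G)$ can be made non-monocardinal, which always holds, so the honest statement is that $G$ admits a weak modular sumset labeling iff there is an independent set $V_2 \subseteq V(G)$ with $V(G)\setminus V_2$ allowed to be labeled monocardinally --- and this is possible for \emph{any} graph by taking $V_2 = \emptyset$ (all vertices monocardinal). So the real content of the ``only if'' direction, and the place where bipartiteness enters, must be in a sharper reading of ``contains monocardinal edges'': presumably the theorem intends a weak labeling with \emph{no} monocardinal edges, i.e.\ every edge has exactly one non-singleton endpoint, forcing $V_2$ to be a maximal independent set meeting every edge, i.e.\ a vertex cover complement, i.e.\ $V_2$ independent and $V_1$ also independent --- that is exactly bipartiteness.

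Hence the structure I would actually write is: (only if) given a weak modular sumset labeling $f$, let $V_2$ be the non-monocardinal vertices; $V_2$ is independent by the remark; if $V_1$ (the monocardinal vertices) contains no edge then $V_1$ is also independent and $G = G[V_1] \cup G[V_2] \cup (\text{edges between})$ is bipartite; otherwise $G[V_1]$ contains an edge, and every such edge is monocardinal by the remark, so $G$ ``contains monocardinal edges.'' (if) conversely, if $G$ is bipartite use the two-part construction above with $n$ large; if $G$ has an edge we are willing to make monocardinal, iterate: let $M$ be a set of monocardinal vertices chosen so that $V(G)\setminus M$ is independent (such $M$ exists, e.g.\ $M = V(G)$), assign distinct singletons on $M$ and distinct non-singletons on the independent remainder, and check each edge: edges inside $M$ are monocardinal, edges with an endpoint outside $M$ inherit the cardinality of that endpoint's label. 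The main obstacle is pinning down the intended precise meaning of ``contains monocardinal edges'' so that the statement is a genuine dichotomy rather than a tautology; once that reading is fixed (an edge both of whose ends \emph{must} be monocardinal in any weak labeling, equivalently $G$ minus a maximum independent set still has an edge), the verification of both directions is the short bookkeeping sketched above, and the freedom to enlarge $n$ removes every injectivity concern via Theorems~\ref{T-CST} and~\ref{T-CST2}.
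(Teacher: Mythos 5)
The paper itself offers no proof of Theorem~\ref{T-wSL-1}: it is asserted as an ``analogous'' transcription of the corresponding result for weak IASL-graphs, so there is no argument of the author's to compare yours against. Judged on its own, the skeleton you finally settle on (your ``structure I would actually write'' paragraph) is the right one and is essentially complete. Given a weak modular sumset labeling, the vertices with non-singleton set-labels form an independent set $V_2$, since no two adjacent vertices may both carry non-singleton labels; writing $V_1$ for the complementary set of monocardinal vertices, either $V_1$ is also independent, in which case the partition $(V_1,V_2)$ exhibits $G$ as bipartite, or $G[V_1]$ contains an edge, which is then monocardinal because the modular sumset of two singletons is a singleton. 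Conversely, a bipartite graph is weakly labeled by assigning distinct singletons to one part and distinct non-singleton subsets of a sufficiently large $\mZ$ to the other. Your diagnosis of the remaining branch is also correct and is worth stating plainly rather than circling around: ``contains monocardinal edges'' is a property of a \emph{labeled} graph, not of the abstract graph $G$, so that disjunct of the ``if'' direction is circular as literally written; indeed the all-distinct-singletons assignment is a weak modular sumset labeling of \emph{every} graph (each edge label is again a singleton), so the existence claim holds unconditionally and the theorem's genuine content is the ``only if'' dichotomy you prove. The only substantive criticism of your submission is presentational: most of it is deliberation about what the statement means, and the actual proof --- the two short arguments at the end --- should be extracted, the chosen reading of ``contains monocardinal edges'' fixed explicitly, and the rest discarded.
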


\begin{thm}\label{T-wSL-2}
A graph $G$ admits a weakly uniform modular sumset labeling if and only if $G$ is bipartite. 
\end{thm}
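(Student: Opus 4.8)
The plan is to piece together the "weakly uniform" case from the structural characterization already in hand (Theorem \ref{T-wSL-1}) together with the cardinality facts for modular sumsets. First I would unpack what a \emph{weakly uniform} modular sumset labeling means: it is a weak modular sumset labeling that is also $k$-uniform, i.e. every edge $uv$ has $|f^+(uv)|=k$ for a fixed $k$. Since $f$ is weak, each edge has at least one monocardinal end vertex, so by the Proposition every edge $uv$ satisfies $|f^+(uv)|=\max\{|f(u)|,|f(v)|\}$; hence the common edge value $k$ equals the set-labeling number of the non-monocardinal end vertex (or $1$ if both ends are monocardinal).

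For the "if" direction, suppose $G$ is bipartite with parts $X$ and $Y$. I would assign every vertex of $X$ a distinct singleton subset of $\mZ$ and every vertex of $Y$ a distinct subset of fixed cardinality $k$ (choosing $n$ large enough, which is permissible by the existence theorem and the remark on enlarging $n$; note $k=1$ works trivially, giving a $1$-uniform weak labeling). Then for each edge $uv$ with $u\in X$, $v\in Y$ we get $|f^+(uv)|=|f(u)|\,|f(v)|$ when $|f(u)|=1$, so $|f^+(uv)|=k$; this is weak (the edge value equals $|f(v)|$) and $k$-uniform. Injectivity on vertices is arranged by the distinctness of the chosen subsets.

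For the "only if" direction, assume $G$ admits a weakly uniform modular sumset labeling $f$ with common edge-cardinality $k$. If $k=1$, then every edge is monocardinal, so by the remark all its end vertices are monocardinal, forcing $V(G)$ to be $1$-uniformly set-labeled; but then $f$ being injective on at most $n$ singletons is fine, yet every such graph is in particular bipartite only if\,... — here I must be careful: a $1$-uniform labeling exists on \emph{any} graph, so I expect the intended reading is that $k\ge 2$ is needed to force bipartiteness, or that the definition of "weakly uniform" tacitly excludes the degenerate monocardinal case, mirroring the IASL convention. Assuming $k\ge 2$: let $H$ be the subgraph induced by the non-monocardinal vertices. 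No two adjacent vertices can both be non-monocardinal (the remark/weakness condition), so every edge of $G$ has exactly one end in the monocardinal set $M$ and one end outside — that is, $M$ and $V(G)\setminus M$ form a bipartition, so $G$ is bipartite.

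The main obstacle I foresee is precisely the edge case $k=1$: since a $1$-uniform weak labeling exists for \emph{every} graph, the statement as literally written is false unless "weakly uniform" is understood to require $k\ge 2$ (or to exclude all-monocardinal labelings), exactly as in the weak IASL literature being imitated. I would resolve this by adopting that convention explicitly (citing \cite{GS1,GS0}), after which the two directions above go through cleanly; the rest is routine application of Theorem \ref{T-CST2} and the Proposition.
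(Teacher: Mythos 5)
Your proposal is essentially correct, and it is worth noting that the paper itself offers no proof of this theorem at all: it is asserted by analogy with the corresponding result for weak IASL-graphs, so your argument supplies what the paper leaves implicit. Both directions of your proof are the standard ones and they work: for the converse, weakness plus $k$-uniformity with $k\ge 2$ forces every edge to have exactly one monocardinal end, so the monocardinal vertices and their complement give a bipartition; for the forward direction, singletons on one part and distinct $k$-element sets on the other (with $n$ large enough) produce a weakly $k$-uniform labeling, since translating a $k$-element set by a single residue preserves its cardinality. Your flagging of the $k=1$ case is a genuine and correct observation, not a quibble: labeling all vertices by distinct singletons gives a weakly $1$-uniform modular sumset labeling of \emph{any} graph, so the theorem as literally stated is false unless ``weakly uniform'' is read with $k\ge 2$ (or the all-monocardinal labeling is excluded), exactly the convention used in the weak IASI literature the paper is imitating. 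The only cosmetic point is that you do not really need the Proposition's full characterization of $|A+B|=|A|$; the one-line observation that $\{a\}+B$ is a translate of $B$ suffices in both directions.
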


\ni Figure \ref{fig:G-WSL} illustrates a weak modular sumset graph with respect to the ground set $\mathbb{Z}_6$.

\begin{figure}[h!]
\centering
\includegraphics[width=0.45\linewidth]{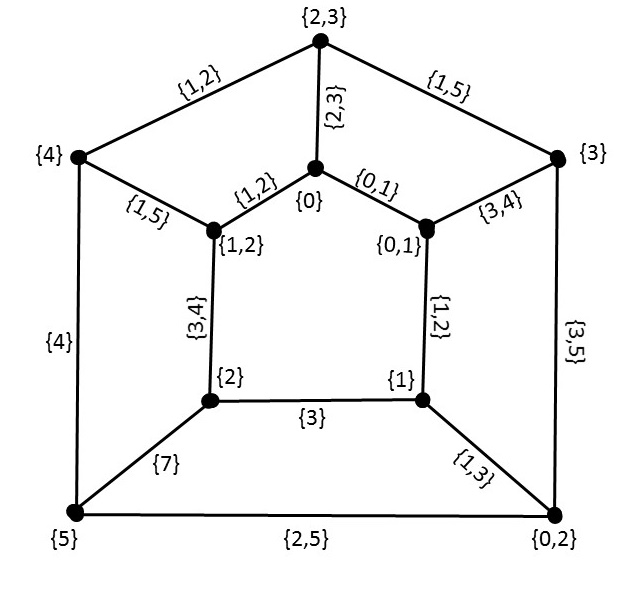}
\caption{An example to a weak modular sumset graph.}
\label{fig:G-WSL}
\end{figure}

It can be noticed that the study about weak modular sumset graphs is very much close to the study of weak IASL-graphs and hence offers no novelty in this context.

\subsection{Weak Modular Sumset Number of Graphs}

\ni As a special case of modular sumset number of graph we introduce the following notion.
\begin{defn}{\rm
The \textit{weak modular sumset number} of a graph $G$ is defined to be the minimum value of $n$ such that a modular sumset labeling $f:V(G)\to \sP(\mZ)$ is a weak modular sumset labeling of $G$.}
\end{defn}

In this section, we initiate a study about the weak modular sumset number of different graph classes. First we observe the following proposition.

In the following theorem, we determine the weak sumset number of an arbitrary graph $G$ in terms of its covering and independence numbers. 

\begin{thm}\label{T-WSL-G}
Let $G$ be a modular sumset graph and $\alpha$ and $\beta$ be its covering number and independence number respectively. Then, the weak modular sumset number of $G$ is $\max \{\alpha, r\}$, where $r$ is the smallest positive integer such that~~ $2^{r}-r-1\ge \beta$.
\end{thm}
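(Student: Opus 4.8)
The plan is to bound the weak modular sumset number from below by each of $\alpha$ and $r$ separately, and then to construct a weak modular sumset labeling over $\mathbb{Z}_n$ with $n=\max\{\alpha,r\}$ that achieves the bound. The guiding structural fact is the remark following the definition of weak modular sumset labeling: in any weak modular sumset labeling, no two adjacent vertices may both carry non-singleton set-labels. Hence the set $W$ of vertices with non-singleton set-labels is independent in $G$, and its complement $V(G)\setminus W$ is a vertex cover, so $|V(G)\setminus W|\ge \alpha$ and $|W|\le \beta$. This is the combinatorial skeleton that links the two parameters $\alpha$ and $\beta$ to the problem.

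First I would establish the lower bound $n\ge\alpha$. The monocardinal (singleton) vertices of $G$ receive distinct one-element subsets of $\mathbb{Z}_n$, and since $V(G)\setminus W$ is a vertex cover it contains at least $\alpha$ vertices; these need at least $\alpha$ distinct singletons, forcing $|\mathbb{Z}_n|=n\ge\alpha$. Second, I would establish $n\ge r$. The vertices of $W$ receive distinct subsets of $\mathbb{Z}_n$ of size at least $2$; to avoid collisions with the singletons already used and with the full set or near-full sets that would violate the weakness condition (by the Proposition, a sumset equals the cardinality of one summand exactly when the other summand is a singleton or everything is $\mathbb{Z}_n$), one checks that the available "safe" non-singleton subsets number $2^n-n-1$ (all subsets, minus the $n$ singletons, minus the empty set). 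Requiring $2^n-n-1\ge|W|$ for the worst case $|W|=\beta$ gives $n\ge r$ by definition of $r$. Combining, $n\ge\max\{\alpha,r\}$.

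For the reverse direction I would exhibit a labeling over $\mathbb{Z}_n$ with $n=\max\{\alpha,r\}$. Fix a maximum independent set $I$ with $|I|=\beta$ and vertex cover $C=V(G)\setminus I$ with $|C|=\alpha\le n$. Assign to the vertices of $C$ distinct singletons from $\mathbb{Z}_n$ (possible since $|C|\le n$). Since $n\ge r$, there are at least $2^n-n-1\ge\beta=|I|$ subsets of $\mathbb{Z}_n$ of size at least $2$ and distinct from all the singletons used; assign these, distinctly, to the vertices of $I$. Then $f$ is injective. For any edge $uv$, at least one endpoint lies in $C$ (as $C$ is a cover), say $u$, so $f(u)$ is a singleton; by Theorem~\ref{T-CST2} (or directly), $|f^+(uv)|=|f(u)+f(v)|=|f(v)|$, matching the set-labeling number of the endpoint $v$ — so the labeling is weak. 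Hence $n=\max\{\alpha,r\}$ suffices, and together with the lower bound the weak modular sumset number equals $\max\{\alpha,r\}$.

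The main obstacle I anticipate is the careful counting of "admissible" non-singleton labels for the vertices in $I$ — one must be sure that assigning arbitrary size-$\ge 2$ subsets to an independent set never forces an edge to fail the weakness condition, which is exactly where the Proposition on $|A+B|=|A|$ is used: since every edge has a singleton endpoint, the weakness condition is automatic and no further constraints among the non-singleton labels are needed beyond injectivity. A secondary subtlety is confirming that the bound $2^n-n-1\ge\beta$ is the genuinely tight requirement (rather than, say, $2^n-n-1\ge\beta$ being replaceable by a weaker inequality), which follows because in the extremal configuration all $\beta$ independent vertices simultaneously need non-singleton labels disjoint from the $\le n$ singletons forced by the cover.
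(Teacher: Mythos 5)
Your argument follows essentially the same route as the paper's: the non-singleton-labelled vertices form an independent set (hence at most $\beta$ of them), the remaining at least $\alpha$ vertices require distinct singletons so $n\ge\alpha$, and the independent vertices require distinct members of the $2^n-n-1$ non-empty non-singleton subsets so $n\ge r$, giving $n\ge\max\{\alpha,r\}$ with a matching labeling on top. Your explicit construction for the upper bound is a useful addition that the paper leaves implicit, and the subtlety you flag at the end --- that the lower bound $n\ge r$ presupposes the extremal labeling really does assign non-singleton labels to a full maximum independent set rather than to some smaller independent set --- is passed over in the paper's proof as well.
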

\begin{proof}
First recall the result $\alpha(G)+\beta(G)=|V(G)|$. Since $G$ is a modular sumset graph, no two adjacent vertices can have non-singleton set-labels simultaneously. Therefore, the maximum number of vertices that have non-singleton set-labels is $\beta$. Let $V'$ be the set of these independent vertices in $G$. Therefore, the minimum number of vertices that are monocardinal is $|V(G)|-\beta=\alpha$. Since all these vertices in $V-V'$ must have distinct singleton set-labels, the ground set must have at least $\alpha$ elements. 

Also, the number non-empty, non-singleton subsets of the ground set must be greater than or equal to $\alpha$, since, otherwise all the vertices in $V'$ can not be labeled by non-singleton subsets of this ground set. Note that the number of non-empty, non-singleton subsets of a set $A$ is $2^{|A|}-|A|-1$.

Therefore, the weak modular sumset number $G$ is $\alpha$ if $2^{\alpha}-\alpha -1\ge \beta$. Otherwise, the ground set must have at least $r$ elements such that $2^{r}-r -1\ge \beta$. Therefore, in this case, the weak modular sumset number of $G$ is $r$, where $r$ is the smallest positive integer such that  $2^{r}-r -1\ge \beta$. Hence, $\sigma^{\ast}(G)=\max \{\alpha,  r\}$.
\end{proof}

We shall now discuss the weak modular sumset number of certain standard graph classes. First consider a path graph $P_m$ on $m$ vertices.

\begin{prop}
The weak modular sumset number of a path $P_m$ on $m$ vertices is given by 
\begin{equation*}
\sigma^{\ast}(P_m)=
\begin{cases}
m & \text{if}~~ m\le 2\\
\lfloor \frac{m}{2} \rfloor & \text{if}~~ m>2
\end{cases}
\end{equation*}
\end{prop}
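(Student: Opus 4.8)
The plan is to derive this from Theorem~\ref{T-WSL-G}, so the first task is to record the covering and independence numbers of a path. A maximum matching of $P_m$ has $\lfloor m/2\rfloor$ edges, and since $P_m$ is bipartite this number equals the covering number; thus $\alpha(P_m)=\lfloor m/2\rfloor$, and the identity $\alpha+\beta=|V(P_m)|$ recalled above gives $\beta(P_m)=\lceil m/2\rceil$. Substituting into Theorem~\ref{T-WSL-G}, one gets $\sigma^\ast(P_m)=\max\{\lfloor m/2\rfloor,\,r\}$, where $r$ is the least positive integer with $2^r-r-1\ge\lceil m/2\rceil$. The whole proof is then a matter of comparing $\lfloor m/2\rfloor$ with $r$, plus two small cases.

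The small cases are handled directly. If $m=1$ there are no edges, so every injective vertex labeling is vacuously weak and a single subset suffices; if $m=2$ the graph is one edge, $n=1$ fails since $\mathbb{Z}_1$ has only one non-empty subset, while $n=2$ works via $f(v_1)=\{0\}$, $f(v_2)=\{1\}$, whose edge-label $\{1\}$ has the common cardinality $1$. This yields the branch $\sigma^\ast(P_m)=m$ for $m\le 2$.

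For $m>2$ the goal is to show the maximum in Theorem~\ref{T-WSL-G} is attained by $\alpha(P_m)=\lfloor m/2\rfloor$, i.e. that $r\le\lfloor m/2\rfloor$. Writing $k=\lfloor m/2\rfloor$, this is the inequality $2^k-k-1\ge\lceil m/2\rceil$; since $\lceil m/2\rceil\le k+1$, it is enough to prove $2^k-k-1\ge k+1$, equivalently $2^{k-1}\ge k+1$, which holds for all large enough $k$ by an immediate induction once the base case is settled. One then concludes $\sigma^\ast(P_m)=\max\{k,r\}=k=\lfloor m/2\rfloor$. To close the argument I would also exhibit a matching weak labeling: take the two colour classes of $P_m$, assign the $\lfloor m/2\rfloor$ distinct singletons of $\mathbb{Z}_{\lfloor m/2\rfloor}$ to the smaller class and distinct non-singleton subsets to the larger class; since adding a fixed element of $\mathbb{Z}_n$ is a bijection, every edge inherits the cardinality of its non-singleton endpoint, so the labeling is weak.

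The step I expect to be the real obstacle is the threshold behaviour of $2^{k-1}\ge k+1$: this inequality fails for the first couple of values of $k$, so the clean formula can only hold once $k$ (hence $m$) is past that threshold, and the smallest cases with $m>2$ --- namely $m=3,4,5$ --- must be examined separately against both the count $2^n-n-1$ of available non-singleton subsets and the plain injectivity requirement $2^n-1\ge m$ before the uniform claim $\sigma^\ast(P_m)=\lfloor m/2\rfloor$ can be asserted for all $m>2$. Everything else is routine arithmetic together with the bookkeeping of singleton versus non-singleton set-labels along the path.
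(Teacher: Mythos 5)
Your route is the same as the paper's: compute $\alpha(P_m)=\lfloor m/2\rfloor$ and $\beta(P_m)=\lceil m/2\rceil$, feed them into Theorem~\ref{T-WSL-G}, and dispose of $m\le 2$ by hand. For $m\ge 6$ your reduction is clean: with $k=\lfloor m/2\rfloor\ge 3$ the chain $\lceil m/2\rceil\le k+1\le 2^{k-1}$ gives $2^{k}-k-1\ge\beta$, hence $r\le\alpha$ and $\sigma^{\ast}(P_m)=\alpha$.

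The step you defer, however, is not routine bookkeeping --- it is where the argument actually breaks. The inequality $2^{\alpha}-\alpha-1\ge\beta$ fails for $m=3,4,5$: for $m=3$ one has $\alpha=1$, $\beta=2$ and $2^{1}-1-1=0<2$; for $m=4,5$ one has $\alpha=2$ and $2^{2}-2-1=1<\beta$. In each case the smallest $r$ with $2^{r}-r-1\ge\beta$ is $r=3$, so Theorem~\ref{T-WSL-G} returns $\max\{\alpha,r\}=3$, not $\lfloor m/2\rfloor$. The paper's proof obscures this by asserting $2^{\lfloor m/2\rfloor}>2$ for all $m\ge 3$ (false at $m=3$, where the exponent is $1$) and by then writing the inequality as $2^{\lfloor m/2\rfloor}+\lfloor m/2\rfloor-1>\lceil m/2\rceil$, with the sign of the middle term flipped relative to the quantity $2^{\alpha}-\alpha-1$ that the theorem requires; the sign matters precisely in the range $m\le 5$. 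So your instinct to examine $m=3,4,5$ separately is correct, but carrying that examination out against Theorem~\ref{T-WSL-G} contradicts the stated formula rather than confirming it. Either the proposition must exclude (or correct) those three values, or Theorem~\ref{T-WSL-G} itself must be revisited (its requirement that \emph{all} $\beta$ independent vertices receive non-singleton labels is stronger than the definition of a weak labeling demands); as things stand, neither your argument nor the paper's closes for $m=3,4,5$.
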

\begin{proof}
Let $P_m$ denote be a path on $m$ vertices. Then, $\alpha=\lfloor \frac{m}{2} \rfloor$ and $\beta=\lceil \frac{m}{2} \rceil$. It is obvious that we need a singleton set to label the single vertex of $P_m$ if $m=1$ and a two element set to label the two vertices of $P_m$ when $m=2$. Hence, let $m\ge 3$. For any positive integer $m\ge 3$, we have $2^{\lfloor \frac{m}{2} \rfloor}>2$. Therefore, $2^{\lfloor \frac{m}{2} \rfloor}+\lfloor \frac{m}{2} \rfloor-1 > \lceil \frac{m}{2} \rceil$. That is, $2^{\alpha}-\alpha-1>\beta$. Hence, by Theorem \ref{T-WSL-G}, $\sigma^{\ast}(P_m)=\alpha=\lfloor \frac{m}{2} \rfloor$.
\end{proof}

\ni Next graph we consider is a cycle $C_m$ on $m$ vertices. The weak modular sumset number a cycle $C_m$ is given in the following result.

\begin{prop}
The weak modular sumset number of a cycle $C_m$, on $m$ vertices is 
\begin{equation*}
\sigma^{\ast}(C_m)=
\begin{cases}
m-1 & \text{if}~~ n=3, 4\\
\lceil \frac{m}{2} \rceil & \text{if}~~ n\ge 5
\end{cases}.
\end{equation*}
\end{prop}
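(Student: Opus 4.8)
The plan is to invoke Theorem~\ref{T-WSL-G}, so that the whole statement reduces to computing the covering number $\alpha$ and independence number $\beta$ of $C_m$ and then evaluating $\max\{\alpha,r\}$, where $r$ is the least positive integer with $2^{r}-r-1\ge\beta$. For the cycle these parameters are standard: $\beta(C_m)=\lfloor m/2\rfloor$, and since $\alpha(C_m)+\beta(C_m)=m$ we get $\alpha(C_m)=\lceil m/2\rceil$. Thus the proof is just a case analysis driven by the size of $m$ (I read the statement's ``$n=3,4$'' as ``$m=3,4$'').

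First I would dispose of the small cases $m=3$ and $m=4$. Here $\alpha=2$, so $2^{\alpha}-\alpha-1=1$; this is $\ge\beta$ exactly when $m=3$ (where $\beta=1$) and fails when $m=4$ (where $\beta=2$). A direct check gives $r=2=\alpha$ when $m=3$ and $r=3>\alpha=2$ when $m=4$, so in both cases $\max\{\alpha,r\}=m-1$, matching the claim. I would add a one-line remark explaining the combinatorial content of these exceptions: for $C_3=K_3$ the two forced monocardinal vertices already require at least two ground-set elements, while for $C_4$ the ground set $\mathbb{Z}_2$ fails to provide enough distinct non-singleton subsets for the two non-singleton vertices.

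For the generic case $m\ge 5$, the key point is to show $\alpha\ge r$, i.e.\ that $n=\lceil m/2\rceil$ already satisfies $2^{n}-n-1\ge\beta=\lfloor m/2\rfloor$. Since $\lfloor m/2\rfloor\le\lceil m/2\rceil$, it suffices to prove $2^{n}-n-1\ge n$, that is $2^{n}\ge 2n+1$, for $n=\lceil m/2\rceil\ge 3$; this is an easy induction on $n$ with base case $n=3$ ($8\ge 7$). Hence $r\le\alpha$, and Theorem~\ref{T-WSL-G} gives $\sigma^{\ast}(C_m)=\max\{\alpha,r\}=\alpha=\lceil m/2\rceil$.

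The only mildly delicate part is the bookkeeping around the exceptional values: one must resist collapsing the answer into a single uniform formula, because at $m=3$ the value $m-1$ coincides with $\alpha$ whereas at $m=4$ it coincides with $r$, and these merely happen to agree numerically. Everything else is routine once Theorem~\ref{T-WSL-G} and the standard values of $\alpha(C_m)$ and $\beta(C_m)$ are in hand.
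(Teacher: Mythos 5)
Your proposal is correct and follows essentially the same route as the paper: both invoke Theorem~\ref{T-WSL-G} with $\alpha(C_m)=\lceil m/2\rceil$, $\beta(C_m)=\lfloor m/2\rfloor$, handle $m=3,4$ by direct computation of $r$, and for $m\ge 5$ verify that $2^{\alpha}-\alpha-1\ge\beta$ so the answer is $\alpha$. Your justification of the generic case via $2^{n}\ge 2n+1$ is in fact slightly cleaner than the paper's, which contains a sign slip in the corresponding inequality.
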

\begin{proof}
For $C_3$, $\alpha=2$ and $\beta=1$. Therefore, $2^{\alpha}-\alpha-1=1=\beta$. Therefore, by Theorem \ref{T-WSL-G}, $\sigma^{\ast}(C_3)=2$. If $C_4$, $\alpha=2$ and $\beta=2$. Now, $2^{\alpha}-\alpha-1=1<\beta$. Also, for $r=3$, $2^r-r-1=2^3-3-1=4>\beta$. Therefore, by Theorem \ref{T-WSL-G}, $\sigma^{\ast}(C_4)=3$ 

For a cycle $C_m;~m\ge 5$, we have $\alpha=\lceil \frac{m}{2} \rceil$ and $\beta=\lfloor \frac{m}{2} \rfloor$. Since $m\ge 3$, $2^{\lceil \frac{m}{2} \rceil}\ge 4$. Therefore, $2^{\lceil \frac{m}{2} \rceil}+\lceil \frac{m}{2} \rceil-1 > \lfloor \frac{m}{2} \rfloor$. That is, $2^{\alpha}-\alpha-1>\beta$. Hence, by Theorem \ref{T-WSL-G}, $\sigma^{\ast}(C_m)=\alpha=\lceil \frac{m}{2} \rceil$.
\end{proof}

Another graph that can be generated from a cycle is a \textit{wheel graph}, denoted by $W_{n+1}$, which is the graph obtained by joining every vertex of a cycle $C_n$ to an external vertex. That is, $W_{n+1}=C_n+K_1$. The following result provides the weak modular sumset number of a wheel graph.

\begin{prop}
The weak modular sumset number of a wheel graph $W_{m+1}$ is $1+\lceil \frac{m}{2} \rceil$.
\end{prop}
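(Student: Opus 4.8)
The plan is to apply Theorem~\ref{T-WSL-G} to the wheel graph $W_{m+1} = C_m + K_1$, so the main work is to compute its covering number $\alpha$ and independence number $\beta$, and then to verify which branch of the formula $\max\{\alpha, r\}$ is active.

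First I would determine the independence number $\beta(W_{m+1})$. The hub vertex (the $K_1$) is adjacent to every vertex of $C_m$, so any independent set containing the hub is a singleton; otherwise an independent set lies entirely in $C_m$ and has size at most $\lfloor m/2 \rfloor$. Hence $\beta(W_{m+1}) = \lfloor m/2 \rfloor$ for $m \ge 3$ (and this is at least $1$, so the hub case never wins). Then $\alpha(W_{m+1}) = |V| - \beta = (m+1) - \lfloor m/2 \rfloor = 1 + \lceil m/2 \rceil$.

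Next I would check, as in the proofs of the preceding propositions, that the covering number dominates, i.e.\ that $2^{\alpha} - \alpha - 1 \ge \beta$ so that $r \le \alpha$ and $\max\{\alpha, r\} = \alpha$. Here $\alpha = 1 + \lceil m/2 \rceil \ge 3$ for all $m \ge 3$, so $2^{\alpha} - \alpha - 1 \ge 2^3 - 3 - 1 = 4$, while $\beta = \lfloor m/2 \rfloor$; a short monotonicity argument (the exponential $2^\alpha$ outgrows the linear terms, exactly as in the path and cycle cases) shows $2^{\alpha} - \alpha - 1 > \beta$ for every $m \ge 3$. Therefore Theorem~\ref{T-WSL-G} gives $\sigma^{\ast}(W_{m+1}) = \alpha = 1 + \lceil m/2 \rceil$.

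I do not anticipate a serious obstacle: the only mild subtlety is confirming that, unlike the small cases $C_3, C_4$ where the inequality $2^{\alpha} - \alpha - 1 \ge \beta$ fails, the extra hub vertex pushes $\alpha$ up to at least $3$ so that the exponential bound is comfortably satisfied even for $m = 3$ (where $W_4 = K_4$, $\alpha = 3$, $\beta = 1$, and indeed $2^3 - 3 - 1 = 4 \ge 1$). Thus no separate small-case analysis is needed, and the formula $1 + \lceil m/2 \rceil$ holds uniformly for all $m \ge 3$.
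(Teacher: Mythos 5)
Your proposal is correct and follows essentially the same route as the paper: compute $\alpha(W_{m+1})=1+\lceil m/2\rceil$ and $\beta(W_{m+1})=\lfloor m/2\rfloor$, verify $2^{\alpha}-\alpha-1>\beta$, and invoke Theorem~\ref{T-WSL-G}. In fact you supply slightly more detail than the paper does, by justifying the value of $\beta$ via the hub vertex and checking the boundary case $m=3$ explicitly.
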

\begin{proof}
For a wheel graph $W_{m+1}=C_m+K_1$, we have $\alpha=1+\lceil \frac{m}{2} \rceil$ and $\beta=\lfloor \frac{m}{2} \rfloor$. Since $m\ge 3$, as explained in the previous theorems, $2^{\alpha}-\alpha-1>\beta$. Hence, by Theorem \ref{T-WSL-G}, $\sigma^{\ast}(W_{m+1})=1+\lceil \frac{m}{2} \rceil$.
\end{proof}

Another graph that is close to cycles and wheel graphs is a \textit{helm graph}, denoted by $H_m$, which is the graph obtained by adjoining a pendant edge to each vertex of the outer cycle $C_m$ of a wheel graph $W_{m+1}$ . It has $2m+1$ vertices and $3m$ edges. The next result establishes the weak modular sumset number of a helm graph.

\begin{prop}
The weak modular sumset number of a helm graph $H_m$ is $m$.
\end{prop}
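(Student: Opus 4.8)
The plan is to invoke Theorem~\ref{T-WSL-G}, so the task reduces to computing the covering number $\alpha$ and the independence number $\beta$ of $H_m$ and then checking a small numerical inequality. Recall that $H_m$ consists of a hub vertex $c$, the outer cycle vertices $u_1,u_2,\ldots,u_m$ (each adjacent to $c$ and forming $C_m$), and pendant vertices $w_1,w_2,\ldots,w_m$ with $w_i$ adjacent to $u_i$ only; thus $|V(H_m)|=2m+1$.

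First I would determine $\beta(H_m)$. The set $\{c,w_1,w_2,\ldots,w_m\}$ is independent, since $c$ is non-adjacent to every $w_i$ and the $w_i$'s are mutually non-adjacent; hence $\beta(H_m)\ge m+1$. For the reverse inequality, observe that an independent set $S$ either contains $c$ — in which case it avoids all $u_i$ and so consists of $c$ together with a subset of $\{w_1,\ldots,w_m\}$, giving $|S|\le m+1$ — or it does not contain $c$, in which case, by pairing each $u_i$ with the adjacent $w_i$, it picks at most one vertex from each pair $\{u_i,w_i\}$ and therefore $|S|\le m$. Hence $\beta(H_m)=m+1$, and from $\alpha(H_m)+\beta(H_m)=|V(H_m)|$ we obtain $\alpha(H_m)=m$.

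Next, by Theorem~\ref{T-WSL-G}, $\sigma^{\ast}(H_m)=\max\{\alpha,r\}=\max\{m,r\}$, where $r$ is the least positive integer with $2^r-r-1\ge \beta=m+1$. It then remains to show $r\le m$, i.e. that $2^m-m-1\ge m+1$, equivalently $2^{m-1}\ge m+1$. This holds for $m=3$ (both sides equal $4$) and, by an easy induction, for all $m\ge 3$, since doubling the left side at least doubles the right. Consequently $r\le m=\alpha$, so $\max\{m,r\}=m$, and $\sigma^{\ast}(H_m)=m$ as claimed.

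The only genuinely substantive step is the determination of $\beta(H_m)$ (and hence of $\alpha(H_m)$); once the pairing argument is in place, everything else is a routine application of Theorem~\ref{T-WSL-G} together with a one-line inequality. It is also worth noting that $H_m$ is defined only for $m\ge 3$, which is precisely the range in which $2^{m-1}\ge m+1$ holds, so no separate small-case analysis is needed.
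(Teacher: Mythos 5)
Your proposal is correct and follows essentially the same route as the paper: compute $\alpha(H_m)=m$ and $\beta(H_m)=m+1$, verify $2^{m}-m-1\ge m+1$ for $m\ge 3$, and conclude via Theorem~\ref{T-WSL-G}. The only difference is that you justify the values of $\alpha$ and $\beta$ explicitly (and correctly note that equality, not strict inequality, occurs at $m=3$), whereas the paper simply asserts them.
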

\begin{proof}
For $H_m$, we have $\alpha=m$ and $\beta=m+1$. It is to be noted that $2^{m-1}\ge (m+1)$ for all positive integer $m\ge 3$. Therefore, $2^m-m-1>m+1$. That is, $2^{\alpha}-\alpha-1>\beta$. Hence by Theorem \ref{T-WSL-G}, $\sigma^{\ast}(H_m)=\alpha=m$.
\end{proof}

Another graph which are related to paths are ladder graph, denoted by $L_m$ and is defined as the Cartesian product of a path on $m$ vertices and a path on two vertices. That is, $L_m= p_m\Box P_2$ (or $L_m=P_m\Box K_2$), where $m>2$. The weak modular sumset number of a ladder graph is determined in the following proposition.

\begin{prop}
The weak modular sumset number of a ladder graph $L_m$ is $m$.
\end{prop}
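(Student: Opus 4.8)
The plan is to reduce everything to Theorem \ref{T-WSL-G}, so the first task is to identify the covering number $\alpha$ and the independence number $\beta$ of $L_m=P_m\Box P_2$. The ladder on $2m$ vertices is bipartite, and a checkerboard $2$-colouring splits its vertex set into two classes of size $m$; since the $m$ rungs form a perfect matching, König's theorem gives a minimum vertex cover of size $m$, so $\alpha=m$ and hence $\beta=|V(L_m)|-\alpha=2m-m=m$. (If one prefers not to invoke König, it is equally easy to write down an explicit independent set of size $m$ — one colour class — and an explicit vertex cover of size $m$ — one endpoint of each rung chosen alternately — and observe they are optimal.)

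With $\alpha=\beta=m$ in hand, the remaining step is to determine the quantity $r$ appearing in Theorem \ref{T-WSL-G}, namely the least positive integer with $2^{r}-r-1\ge\beta=m$. I would verify the inequality $2^{m}-m-1\ge m$, equivalently $2^{m}\ge 2m+1$, which holds for every $m\ge 3$; since the ladder graph is defined only for $m>2$, this covers all relevant cases. Therefore $r\le m$, whence $\max\{\alpha,r\}=\max\{m,r\}=m$, and Theorem \ref{T-WSL-G} immediately yields $\sigma^{\ast}(L_m)=m$.

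The argument is essentially the same template already used for $P_m$, $C_m$, $W_{m+1}$ and $H_m$, so there is no serious obstacle; the only points demanding a little care are (i) justifying $\alpha=\beta=m$ cleanly — the cleanest route is the perfect matching plus König, or a direct two-line exhibition of optimal sets — and (ii) checking the base case $m=3$ of $2^{m}\ge 2m+1$ and noting that the hypothesis $m>2$ excludes the small degenerate ladders where the estimate could fail. Once these are settled, the conclusion drops out of Theorem \ref{T-WSL-G} with no further computation.
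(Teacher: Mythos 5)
Your proposal is correct and follows essentially the same route as the paper: both determine $\alpha=\beta=m$ for $L_m=P_m\Box P_2$, verify $2^{m}-m-1\ge m$ for $m>2$, and conclude via Theorem \ref{T-WSL-G} that $\sigma^{\ast}(L_m)=\max\{\alpha,r\}=m$. Your added justification of $\alpha=\beta=m$ via the perfect matching of rungs and K\"onig's theorem is a welcome detail the paper simply asserts.
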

\begin{proof}
Let $L_m=P_m\Box P_2$, where $m>2$. Irrespective of whether $m$ is odd or even, $\alpha=m$ and $\beta=m$. For any positive integer $m>2$, we have $2^m-m-1>m$. That is, $2^\alpha-\alpha-1>\beta$. Then, by Theorem \ref{T-WSL-G}, $\sigma^{\ast}(L_n)=m$.
\end{proof}

\ni Next we proceed to determine the weak modular sumset number of a complete graph $K_n$. is determined in the following result. We already mentioned the weak modular sumset number of $K_1$, $K_2$ and $K_3$ when we discussed the weak modular sumset number of paths and cycles. Hence, need consider complete graphs having $4$ or more vertices. The weak modular sumset number of a complete graph $K_m$, where $m\ge 4$,  is determined in the following result.

\begin{prop}
For a positive integer $m\ge 4$, the weak modular sumset number of a complete graph $K_m$ is $m-1$.
\end{prop}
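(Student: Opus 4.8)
The plan is to apply Theorem \ref{T-WSL-G} directly, since the only quantities it requires are the covering number $\alpha$ and the independence number $\beta$ of $K_m$, both of which are immediate for complete graphs. First I would record that $\beta(K_m)=1$, because any two distinct vertices of $K_m$ are adjacent and hence no independent set has more than one vertex; correspondingly, since $\alpha(G)+\beta(G)=|V(G)|$, we get $\alpha(K_m)=m-1$. (Equivalently, a vertex cover of $K_m$ must omit at most one vertex, so the minimum cover has size $m-1$.)

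Next I would compute the parameter $r$ appearing in Theorem \ref{T-WSL-G}, namely the smallest positive integer with $2^{r}-r-1\ge \beta = 1$. Checking small values: for $r=1$ we have $2^1-1-1=0<1$, while for $r=2$ we have $2^2-2-1=1\ge 1$, so $r=2$.

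Finally, Theorem \ref{T-WSL-G} gives $\sigma^{\ast}(K_m)=\max\{\alpha,r\}=\max\{m-1,\,2\}$. Since we are assuming $m\ge 4$, we have $m-1\ge 3>2$, and therefore $\sigma^{\ast}(K_m)=m-1$, as claimed.

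There is no real obstacle here; the statement is essentially a corollary of Theorem \ref{T-WSL-G} once the two graph parameters are identified. The only point worth a sentence of care is the hypothesis $m\ge 4$: for $m=3$ one would instead get $\max\{2,2\}=2$, which is why the smaller complete graphs are handled separately (and consistently with the earlier path/cycle computations, as the surrounding text notes).
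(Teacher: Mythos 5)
Your argument is correct and follows the same route as the paper: identify $\alpha(K_m)=m-1$ and $\beta(K_m)=1$ and apply Theorem \ref{T-WSL-G}. Your version is in fact slightly more careful, since you explicitly compute $r=2$ and take the maximum, whereas the paper only verifies $2^{\alpha}-\alpha-1>\beta$ (and with a small arithmetic slip at that); the conclusion is the same.
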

\begin{proof}
For a complete graph $K_m$, we have $\alpha=m-1$ and $\beta=1$. Therefore, for any positive integer $m$, we have $2^{m-1}-m-2>1$. That is, $2^{\alpha}-\alpha-1>\beta$. Therefore, the weak modular sumset number of $K_m$ is $m-1$.
\end{proof}

The minimum cardinality of the ground set when the given graph $G$ admits a weakly uniform modular sumset labeling arouses much interest in this occasion. Hence, we have the following result.

\begin{thm}\label{T-WSL-BP}
Let $G$ be a graph with covering number $\alpha$ and independence number $\beta$ and let $G$ admits a  weakly $k$-uniform modular sumset labeling, where $k<\alpha$ being a positive integer. Then, the minimum cardinality of the ground set $\mZ$ is $\max\{\alpha,r\}$, where $r$ is the smallest positive integer such that $\binom{r}{k}\ge \beta$.
\end{thm}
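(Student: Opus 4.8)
The plan is to run the same two-sided counting argument as in the proof of Theorem~\ref{T-WSL-G}, but now bookkeeping the uniformity parameter $k$. Since $G$ admits a weakly uniform modular sumset labeling, $G$ is bipartite by Theorem~\ref{T-wSL-2}, and in every weak modular sumset labeling no two adjacent vertices carry non-singleton set-labels. So, fixing a weakly $k$-uniform labeling $f$ over $\mZ$, the set $V'$ of vertices with non-singleton set-labels is independent, whence $|V'|\le\beta$. Moreover, as $G$ has no isolated vertices, each $u\in V'$ lies on an edge $uv$ with $v$ monocardinal, and Theorem~\ref{T-CST2} gives $|f^{+}(uv)|=|f(u)+f(v)|=|f(u)|$; $k$-uniformity then forces $|f(u)|=k$. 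Hence the vertices of $V'$ receive pairwise distinct $k$-subsets of $\mZ$, while the remaining $|V(G)|-|V'|\ge\alpha$ vertices receive pairwise distinct singletons, yielding the necessary inequalities $n\ge\alpha$ and $\binom{n}{k}\ge\beta$, i.e. $n\ge\max\{\alpha,r\}$ with $r$ as in the statement.

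For the reverse inequality I would exhibit a labeling over a ground set of size $N=\max\{\alpha,r\}$. Fix a bipartition $(X,Y)$ of $G$ with $|X|\le|Y|$, label the vertices of $Y$ by distinct $k$-subsets of $\mZ$ (possible since $\binom{N}{k}\ge\binom{r}{k}\ge\beta$ once $|Y|=\beta$) and the vertices of $X$ by distinct singletons (possible since $N\ge\alpha$). Each edge then joins a singleton-labeled vertex to a $k$-subset-labeled vertex, so its set-label is a translate of a $k$-set and has cardinality exactly $k$; and $f$ is injective because, as $k\ge2$, no singleton coincides with a $k$-subset. Thus $f$ is a weakly $k$-uniform modular sumset labeling over a ground set of size $\max\{\alpha,r\}$, which together with the first paragraph gives the asserted minimum.

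The hard part will be the realizability step just sketched: it needs the singleton side and the non-singleton side to be independent \emph{simultaneously}, i.e. a $2$-colouring of $G$ whose larger class has size $\beta$ and whose smaller class has size $\alpha$. This is exactly the situation in which a minimum vertex cover of $G$ is an independent set (equivalently, $G$ has a matching saturating its smaller colour class), and it is what should be assumed --- or checked for each graph class under study --- to obtain the stated formula; note also that for $k=1$ the condition "every edge has label-cardinality $k$" already forces all vertices monocardinal, so that extremal case must be read separately. I therefore expect the counting in the first two paragraphs to be routine, and this structural hypothesis on $G$ to carry the genuine content (and to be the point most easily glossed over).
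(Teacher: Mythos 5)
Your proposal follows essentially the same route as the paper: invoke Theorem~\ref{T-wSL-2} to get bipartiteness, count that the singleton-labeled side forces $n\ge\alpha$ and the $k$-subset-labeled side forces $\binom{n}{k}\ge\beta$, and realize the bound by an explicit labeling of the two colour classes. The one substantive difference is the caveat in your last paragraph, and you are right to raise it: the paper simply declares that the bipartition $(X,Y)$ with $|X|\le|Y|$ satisfies $\alpha=|X|$ and $\beta=|Y|$, which is exactly the unjustified structural hypothesis you isolate (it fails, e.g., for a double star, where both classes have size $4$ while $\alpha=2$ and $\beta=6$); the paper does not address this, nor the $k=1$ degeneracy. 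Note, though, that your own lower bound inherits the same issue --- from $|V'|\le\beta$ you cannot conclude $\binom{n}{k}\ge\beta$, only $\binom{n}{k}\ge|V'|$ --- so the inequality $\binom{n}{k}\ge\beta$ is genuinely contingent on the non-singleton class having size exactly $\beta$, i.e., on the very hypothesis you flag. Your version is the more careful one (justifying via Theorem~\ref{T-CST2} that every non-singleton label has exactly $k$ elements, and checking injectivity of $f$ in the construction), but both arguments stand or fall on that same assumption about the bipartition sizes.
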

\begin{proof}
Let $G$ be a graph which admits a weakly $k$-uniform sum set labeling $f$ $A=\mZ$. Then, by Theorem \ref{T-wSL-2}, $G$ is bipartite. Let $X,Y$ be the bipartition of the vertex set $V(G)$. If $|X|\le |Y|$, then $\alpha=|X|$ and $\beta=|Y|$. Then, distinct elements of $X$ must have distinct singleton set-labels. Therefore, $n\ge \alpha$. 

On the other hand, since $f$ is $k$-uniform, all the elements in $Y$ must have distinct $k$-element set-labels. The number of $k$-element subsets of a set $A$ (obviously, with more than $k$ elements) is $\binom{|A|}{k}$. The ground set $A$ has $\alpha$ elements only if $\binom{\alpha}{k}\ge \beta$. Otherwise, the ground set $A$ must contain at least $r$ elements, where $r>\alpha$ is the smallest positive integer such that $\binom{r}{k} \ge \beta$. Therefore, $n=\max\{\alpha,r\}$.   
\end{proof}

In the above theorems we considered the value of $k$ which is less than or equal to $\alpha$. What is the case when $k\ge \alpha$? the following theorem provides a solution to this problem.

\begin{cor}
Let $G$ be a weakly $k$-uniform modular sumset graph, where $k\ge \alpha$, where $\alpha$ is the covering number of $G$. Then, the minimum cardinality of the ground set is the smallest positive integer $n$ such that $\binom{n}{k}\ge \beta$,  where $\beta$ is the independence number of $G$.   
\end{cor}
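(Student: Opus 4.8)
The plan is to mimic the proof of Theorem~\ref{T-WSL-BP}, the only new ingredient being the observation that when $k\ge\alpha$ the lower bound on $n$ coming from the monocardinal vertices is automatically absorbed by the lower bound coming from the $k$-uniformly labelled vertices, so only the latter survives.

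First I would invoke Theorem~\ref{T-wSL-2}: since $G$ admits a weakly $k$-uniform modular sumset labeling $f$ with ground set $\mZ$, the graph $G$ is bipartite. Write $X,Y$ for the bipartition of $V(G)$ with $|X|\le|Y|$, so that, exactly as in Theorem~\ref{T-WSL-BP}, $\alpha=|X|$ and $\beta=|Y|$. Because $f$ is weak, no two adjacent vertices carry non-singleton set-labels, and it is consistent (and optimal) to let the vertices of $X$ receive distinct singleton subsets of $\mZ$; since every edge joins $X$ to $Y$ and adding a singleton is a translation of $\mZ$, each edge then inherits the set-labeling number of its $Y$-endpoint, and $k$-uniformity forces every vertex of $Y$ to be labelled by a $k$-element subset of $\mZ$. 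Injectivity of $f$ on $Y$ requires $\binom{n}{k}\ge\beta$, which already forces $n\ge k$.

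Next I would feed in the hypothesis $k\ge\alpha$. Labelling $X$ needs $\alpha$ distinct singletons, i.e.\ $n\ge\alpha$; but $n\ge k\ge\alpha$ holds already, so this constraint is vacuous. Hence every admissible ground set satisfies $\binom{n}{k}\ge\beta$. Conversely, if $n$ is the smallest positive integer with $\binom{n}{k}\ge\beta$, then $n\ge k\ge\alpha$ leaves enough room in $\mZ$ to assign distinct singletons to $X$ and distinct $k$-subsets to $Y$; these two families are disjoint (as $k\neq 1$), so the resulting $f$ is an injective, weakly $k$-uniform modular sumset labeling of $G$. Therefore the minimum cardinality of the ground set is precisely the least $n$ with $\binom{n}{k}\ge\beta$.

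The step needing the most care is the converse construction, namely verifying that the singleton labels on $X$ and the $k$-subset labels on $Y$ can always be chosen simultaneously and injectively once $n\ge k$. This is immediate except in the degenerate case $k=1$ (which forces $\alpha=1$), where the labeling is monocardinal throughout and one must instead demand $n\ge 1+\beta$; apart from this boundary case the whole argument is a direct counting argument whose only conceptual point is that $k\ge\alpha$ makes the covering-number bound redundant.
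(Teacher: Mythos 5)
Your argument is correct and follows essentially the same route as the paper, which simply declares the corollary ``immediate from Theorem~\ref{T-WSL-BP}'' once the constraint $\binom{\alpha}{k}\ge\beta$ becomes vacuous for $k\ge\alpha$; you have merely made explicit the key observation that $\binom{n}{k}\ge\beta$ already forces $n\ge k\ge\alpha$, so the covering-number bound is absorbed. Your remark about the degenerate case $k=1$ (where the stated formula undercounts by one) is a genuine refinement that the paper's proof glosses over, but it does not change the fact that the overall approach coincides.
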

\begin{proof}
If $k\ge \alpha$, then $\binom{\alpha}{k}$ does not exist. Then, the proof is immediate from Theorem \ref{T-WSL-BP}.  
\end{proof}

Now, we proceed to verify the properties of certain graphs in which the set-labeling number of edges are the product of the set-labeling numbers of their end vertices. 

\subsection{Strong Modular Sumset Graphs}

\ni Analogous to strong IASL-graphs, we define the following notion.

\begin{defn}{\rm 
Let $f:V(G)\to \sP(\mZ)$ be a modular sumset labeling defined on a given graph $G$. Then, $f$ is said to be a {\em strong modular sumset labeling} if for the associated function $f^+:E(G)\to \sP(\mZ)$, $|f^+(uv)|=|f(u)|\,|f(v)|~ \forall~uv\in E(G)$. A graph which admits a strong modular sumset labeling is called a {\em strong modular sumset graph}.}
\end{defn}

Let $D_A$ be the \textit{difference set} of a given set $A$ defined by $D_A=\{|a_i-a_j|:a_i,a_j\in A\}$. Then, the necessary and sufficient condition for a graph to admit a strong modular sumset labeling is as given below.

\begin{thm}
A modular sumset labeling $f:V(G)\to \sP(\mZ)$ of a given graph $G$ is a strong modular sumset labeling of $G$ if and only if $D_{f(u)}\cap D_{f(v)}=\emptyset, ~\forall ~ uv\in E(G)$, where $|f(u)|\,|f(v)|\le n$. 
\end{thm}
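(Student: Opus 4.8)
The plan is to reduce the equality $|f(u)+f(v)|=|f(u)|\,|f(v)|$ to a statement about collisions among the sums $a+b \pmod n$ with $a\in f(u)$, $b\in f(v)$. Writing $A=f(u)$ and $B=f(v)$, the $|A|\,|B|$ ordered pairs $(a,b)$ produce $|A|\,|B|$ values $a+b$; by the pigeonhole principle $|A+B|=|A|\,|B|$ holds exactly when all these values are pairwise distinct modulo $n$ (note we are given $|A|\,|B|\le n$, so there is enough room in $\mZ$ for this to be possible). So the first step is: $|A+B|=|A|\,|B|$ iff $a_1+b_1\equiv a_2+b_2 \pmod n$ forces $(a_1,b_1)=(a_2,b_2)$.

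The second step rewrites a collision $a_1+b_1\equiv a_2+b_2\pmod n$ as $a_1-a_2\equiv b_2-b_1\pmod n$. If $(a_1,b_1)\neq(a_2,b_2)$ then $a_1\neq a_2$ (for otherwise $b_1\equiv b_2$, and since $B\subseteq\mZ$ consists of distinct residues this forces $b_1=b_2$, a contradiction), and likewise $b_1\neq b_2$. Hence a nontrivial collision exists iff there are $a_1\neq a_2$ in $A$ and $b_1\neq b_2$ in $B$ with $a_1-a_2\equiv b_2-b_1\pmod n$, i.e. iff a nonzero element of $\mZ$ lies in both $D_A$ and $D_B$ (here the difference set is read modulo $n$; since $0$ is never in the "nonzero" part, the empty-intersection condition $D_{f(u)}\cap D_{f(v)}=\emptyset$ is the clean way to phrase it once one agrees differences are taken in $\mZ\setminus\{0\}$, or equivalently that $D_A$ denotes the set of nonzero differences). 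Thus no nontrivial collision occurs iff $D_A\cap D_B=\emptyset$, which combined with Step 1 gives the equivalence for a single edge $uv$. Quantifying over all edges yields the theorem.

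The main obstacle — really a point requiring care rather than a deep difficulty — is the interaction between the additive structure of $\mZ$ and the notion of difference set: the definition $D_A=\{|a_i-a_j|:a_i,a_j\in A\}$ is stated for integers, but in the modular setting one must interpret $a_i-a_j$ modulo $n$ (and the absolute value is then really the pairing $x\leftrightarrow n-x$, which does not affect whether the intersection is empty). I would state this interpretation explicitly at the start of the proof and check that the "wrap-around" differences are handled correctly, since $a_1-a_2$ and $a_2-a_1$ are negatives mod $n$ and a collision involving one is equivalent to a collision involving the other. Once that bookkeeping is pinned down, Steps 1 and 2 are short counting arguments, and the hypothesis $|f(u)|\,|f(v)|\le n$ guarantees the bound in Theorem~\ref{T-CST2} is actually attainable so that the characterization is non-vacuous.
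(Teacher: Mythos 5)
Your proposal is correct and follows essentially the same route as the paper's proof: both equate $|f(u)+f(v)|=|f(u)|\,|f(v)|$ with the absence of collisions $a_i+b_r\equiv a_j+b_s\pmod{n}$ and translate such a collision into a common difference of the two set-labels. You are in fact more careful than the paper, which argues with integer absolute differences $|a_i-a_j|$ without addressing the wrap-around modulo $n$ or the fact that $0\in D_A$ for every $A$ under the stated definition of the difference set; your explicit treatment of both points tightens an argument the paper leaves loose.
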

\begin{proof}
Let $f:V(G)\to \sP(\mZ)$ be a modular sumset labeling on a given graph $G$. For any vertex $u\in V(G)$, define $D_f(u)=\{a_i-a_j: a_i,a_j\in f(u)\}$. 

Let $uv$ be an arbitrary edge in $E(G)$. Assume that $f$ is a strong modular sumset labeling of $G$. Then, by definition $|f^+(uv)|=|f(u)|\,|f(v)|$. Therefore, for any elements $a_i,a_j \in f(u)$ and $b_r,b_s\in f(v)$, we have $a_i+b_r\ne a_j+b_s$ in $f^+(uv)~\forall~uv\in E(G)$. That is, $|a_i-a_j|\ne |b_s-b_r|$ for any $a_i,a_j\in f(u)$ and $b_r,b_s\in f(v)$. That is, $D_{f(u)}\cap D_{f(v)}=\emptyset$. Therefore, the difference sets of the set-label of any two adjacent vertices are disjoint.

Conversely, assume that the difference  $D_{f(u)}\cap D_{f(v)}=\emptyset$ for any edge $uv$ in $G$. That is, $|a_i-a_j|\ne |b_s-b_r|$ for any $a_i,a_j\in f(u)$ and $b_r,b_s\in f(v)$. Then, $a_i-a_j\ne b_s-b_r$. That is, $a_i+b_r\ne a_j+b_s$. Therefore, all elements in $f(u)+f(v)$ are distinct. That is, $|f^+(uv)|=|f(u)|\,|f(v)|$ for any edge $uv\in E(G)$.  Hence, $f$ is a strong modular sumset labeling of $G$. 

Also, note that the maximum possible cardinality in the set-label of any element of $G$ is $n$, the product $|f(u)|\,|f(v)|$ can not exceed the number $n$. This completes the proof.
\end{proof}

Figure \ref{fig:G-SSL} illustrates a strong modular sumset labeling of a graph with respect to the ground set $\mathbb{Z}_6$.

\begin{figure}[h!]
\centering
\includegraphics[width=0.55\linewidth]{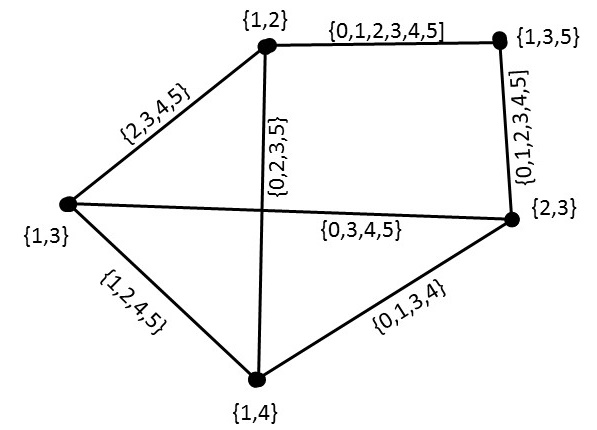}
\caption{An example to a strong modular sumset graph.}
\label{fig:G-SSL}
\end{figure}

Analogous to the weak modular sumset number of a graph $G$, we can define the \textit{strong modular sumset number} of $G$ as the minimum cardinality required for the ground set $\mZ$ so that $G$ admits a strong modular sumset labeling. The choice of ground set $\mZ$ is very important in this context because $n$ should be sufficiently large so that the vertices of the given graph can be labeled in such a way that the difference sets of these set-labels of all adjacent vertices must be pairwise disjoint.

Analogous to the corresponding results on strong IASL-graphs and strongly uniform IASL-graphs proved in \cite{GS2}, the following results are valid for strong modular sumset graphs also.

\begin{thm}\label{T-SSG2}
For a positive integer $k\le n$, a modular sumset labeling $f:V(G)\to \sP(\mZ)$ of a given connected graph $G$ is a a strongly $k$-uniform modular sumset labeling of $G$ if and only if either $k$ is a perfect square or $G$ is bipartite.
\end{thm}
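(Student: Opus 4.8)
The plan is to reduce the statement to a purely arithmetic condition on the cardinality function $v\mapsto |f(v)|$ and then handle the two implications separately.

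First I would record the reformulation that drives everything: a modular sumset labeling $f$ is strongly $k$-uniform exactly when $|f^+(uv)|=|f(u)|\,|f(v)|=k$ for every edge $uv$. Writing $c(v)=|f(v)|$, the existence of such a labeling therefore forces a function $c:V(G)\to\{1,2,\ldots\}$ with $c(u)\,c(v)=k$ on every edge; conversely, any such $c$ can be realised as an actual labeling by first choosing, vertex by vertex, a $c(v)$-element subset of $\mZ$ whose difference set is disjoint from the difference sets of its already-labeled neighbours (here $k\le n$, together with $n$ chosen large enough, guarantees that the required sets fit inside $\mZ$, exactly as in the strong IASL constructions of \cite{GS2}). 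So the core problem becomes: for connected $G$, when does a multiplicative edge-assignment $c(u)\,c(v)=k$ exist?

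For the ``if'' direction, suppose $k=t^2$ is a perfect square; then $c(v)=t$ for all $v$ works. If instead $G$ is bipartite with parts $X,Y$, take $c(v)=1$ on $X$ and $c(v)=k$ on $Y$; every edge now has a monocardinal endpoint, so the strong condition $|f^+(uv)|=|f(u)|\,|f(v)|$ holds automatically (translating a set by a single element preserves its cardinality), and only injectivity of $f$ and the existence of enough distinct $k$-element subsets of $\mZ$ need to be checked. For the ``only if'' direction I would argue the contrapositive: assume $G$ is connected and not bipartite, so it contains an odd cycle $v_0v_1\cdots v_{2\ell}v_0$. From $c(v_{i-1})\,c(v_i)=k=c(v_i)\,c(v_{i+1})$ we get $c(v_{i-1})=c(v_{i+1})$, so the values along the cycle alternate between $a=c(v_0)$ and $b=c(v_1)$; the closing edge $v_{2\ell}v_0$ joins two vertices of even index, giving $a\cdot a=k$, hence $k$ is a perfect square.

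The routine parts are the two small constructions and the odd-cycle parity count. The one step I expect to be the genuine obstacle is verifying realisability inside the fixed ground set $\mZ$: for the perfect-square case one must actually exhibit pairwise-distinct $t$-element subsets of $\mZ$ whose difference sets are pairwise disjoint along edges (a Sidon-type / $B_2$-set packing argument modulo $n$), and confirm that $k\le n$, with $n$ taken sufficiently large, leaves enough room. I would isolate this as a lemma mirroring the corresponding statement for strong IASL-graphs in \cite{GS2} and otherwise follow that paper's argument.
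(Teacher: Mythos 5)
Your proposal is correct and follows essentially the same route as the paper: reduce everything to the multiplicative condition $|f(u)|\,|f(v)|=k$ on each edge, construct labelings directly in the two ``if'' cases, and derive bipartiteness from the forced alternation of the two cardinalities $r,s$ with $rs=k$ when $k$ is not a perfect square. The only cosmetic differences are that you close the converse with an odd-cycle parity count where the paper propagates the two values from a vertex to obtain a bipartition directly, and that you use the divisor pair $(1,k)$ in the bipartite construction (which makes the strong condition automatic) where the paper allows an arbitrary pair $(r,s)$; you are also right that the realisability of the required difference-set-disjoint subsets inside $\mZ$ is a step the paper itself asserts without proof.
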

\begin{proof}
If $k$ is a perfect square, say $k=l^2$, then we can label all the vertices of a graph by distinct $l$-element sets in such a way that the difference sets of the set-labels of every pair of adjacent vertices are disjoint. Hence, assume that $k$ is not a perfect square.   

Let $G$ be a bipartite graph with bipartition $(X,Y)$. Let $r,s$ be two divisors of $k$. Label all vertices of $X$ by distinct $r$-element sets all of whose difference sets are the same, say $D_X$. Similarly, label all vertices of $Y$ by distinct $s$-element sets all of whose difference sets the same, say $D_Y$, such that $D_X\cap D_Y=\emptyset$. Then, all the edges of $G$ has the set-labeling number $k=rs$. Therefore, $G$ is a strongly $k$-uniform modular sumset graph.

Conversely, assume that $G$ admits a strongly $k$-uniform modular sumset labeling, say $f$. Then, $f^+(uv)=k~\forall~uv\in E(G)$. Since, $f$ is a strong modular sumset labeling, the set-labeling number of every vertex of $G$ is a divisor of the set-labeling numbers of the edges incident on that vertex. Let $v$ be a vertex of $G$ with the set-labeling number $r$, where $r$ is a divisor of $k$, but $r^2\ne k$. Since $f$ is $k$-uniform, all the vertices in $N(v)$, must have the set-labeling number $s$, where $rs=k$. Again, all vertices, which are adjacent to the vertices of $N(v)$, must have the set-labeling number $r$. Since $G$ is a connected graph, all vertices of $G$ have the set-labeling number $r$ or $s$. Let $X$ be the set of all vertices f $G$ having the set-labeling number $r$ and $Y$ be the set of all vertices of $G$ having the set-labeling number $s$. Since $r^2\ne k$, no two elements in $X$ (and no elements in $Y$ also) can be adjacent to each other. Therefore, $G$ is bipartite. 
\end{proof}

\begin{thm}
For a positive non-square integer $k\le n$, a modular sumset labeling $f:V(G)\to \sP(\mZ)$ of an arbitrary graph $G$ is a a strongly $k$-uniform modular sumset labeling of $G$ if and only if either $G$ is bipartite or a disjoint union of bipartite components.
\end{thm}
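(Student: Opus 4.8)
The plan is to bootstrap from the connected case already settled in Theorem~\ref{T-SSG2}, working one connected component at a time in both directions of the equivalence.

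For the forward implication, suppose $f:V(G)\to\sP(\mZ)$ is a strongly $k$-uniform modular sumset labeling of $G$, where $k\le n$ is not a perfect square. Write $G=G_1\cup G_2\cup\cdots\cup G_t$ for the decomposition of $G$ into connected components and let $f_i$ denote the restriction of $f$ to $V(G_i)$. Then $f_i$ is injective, and since every edge of $G_i$ is an edge of $G$, for each $uv\in E(G_i)$ we still have $|f_i^+(uv)|=|f(u)|\,|f(v)|$ and $|f_i^+(uv)|=k$; hence $f_i$ is a strongly $k$-uniform modular sumset labeling of the connected graph $G_i$. As $k$ is not a perfect square, Theorem~\ref{T-SSG2} forces each $G_i$ to be bipartite, and so $G$ is a disjoint union of bipartite components.

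For the converse, assume every component $G_i$ of $G$ is bipartite, with bipartition $(X_i,Y_i)$. Fix a factorisation $k=rs$ with $r\ne s$, which exists precisely because $k$ is not a perfect square. On each $G_i$ I would carry out the construction used in the proof of Theorem~\ref{T-SSG2}: label the vertices of $X_i$ by distinct $r$-element subsets of $\mZ$ all sharing one difference set $D_{X_i}$, and the vertices of $Y_i$ by distinct $s$-element subsets all sharing one difference set $D_{Y_i}$, chosen so that $D_{X_i}\cap D_{Y_i}=\emptyset$; every edge of $G_i$ then receives the set-labeling number $rs=k$. Patching these component labelings together yields a function that is strongly $k$-uniform on all of $G$.

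The only point demanding care is global injectivity: the set-labels must be distinct not merely inside each component but across components as well. Since $r\ne s$, no label coming from an $X$-side can coincide with a label from a $Y$-side, so it suffices to secure distinctness among the $r$-element labels and, separately, among the $s$-element labels. For $n$ sufficiently large this can be arranged either by using enough translates $a+S$ of a fixed base set $S$ (all of which share the difference set of $S$) or by assigning the components pairwise disjoint blocks of residues and running the construction inside each block. I expect this bookkeeping to be the main — and essentially the only — obstacle, since everything else reduces to a direct invocation of Theorem~\ref{T-SSG2}.
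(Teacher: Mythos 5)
Your proposal is correct, and it is exactly the argument the paper intends: the paper states this theorem without any proof, presenting it as an immediate consequence of Theorem~\ref{T-SSG2}, and your component-by-component reduction (restrict $f$ to each component for the forward direction, apply the connected-case construction on each component for the converse) is the natural way to fill that gap. The one point you raise that the paper never addresses---injectivity of the vertex labeling \emph{across} components---is a genuine detail, and your handling of it (translates of a fixed base set, or disjoint blocks of residues, for $n$ sufficiently large) is consistent with the level of rigour the paper itself uses in the proof of Theorem~\ref{T-SSG2}.
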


For a positive integer $k\le n$, the maximum number of components in a strongly $k$-uniform modular sumset graph is as follows.

\begin{prop}
Let $f$ be a strongly $k$-uniform modular sumset labeling of a graph $G$ with respect to the ground set $\mZ$. Then, the maximum number of components in $G$ is the number of distinct pairs of divisors  $r$ and $s$ of $k$ such that $rs=k$.
\end{prop}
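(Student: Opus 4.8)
The plan is to combine the structural characterization from Theorem~\ref{T-SSG2} with the elementary divisor-counting idea already used in the strong modular sumset setting. Suppose $f$ is a strongly $k$-uniform modular sumset labeling of $G$ with respect to $\mZ$, and let $G_1,G_2,\ldots,G_t$ be the connected components of $G$. The first step is to observe that, because $f$ is a strong modular sumset labeling, the set-labeling number of each vertex must divide the set-labeling number $k$ of every incident edge; hence every vertex of $G$ carries a set-label whose cardinality is a divisor of $k$. Within a single component $G_i$, the argument in the proof of Theorem~\ref{T-SSG2} shows that either all vertices share one fixed divisor cardinality $d$ with $d^2=k$ (when $k$ is a perfect square), or the vertex cardinalities take exactly two values $r_i,s_i$ with $r_is_i=k$ and $r_i\neq s_i$, distributed so that $G_i$ is bipartite with the two sides carrying cardinalities $r_i$ and $s_i$ respectively.

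The second step is to associate to each component $G_i$ the unordered pair $\{r_i,s_i\}$ of divisors of $k$ with $r_is_i=k$ (taking $r_i=s_i=\sqrt{k}$ in the square case). The key claim is that this assignment is \emph{injective}: no two distinct components can be assigned the same pair $\{r,s\}$. This is where the injectivity of the vertex labeling $f$ is used rather than just the cardinality constraints. If two components $G_i$ and $G_j$ were both governed by the same pair $\{r,s\}$ with $r\neq s$, one would want to exhibit a conflict — but note that in a bipartite component one is free to use the \emph{same} difference set $D_X$ on one side and $D_Y$ on the other, so the constraint is really about which difference-set pairs are available. The honest version of the statement is that the number of components is at most the number of such divisor pairs, because each component consumes one "type" $\{r,s\}$ in the sense that a vertex of cardinality $r$ adjacent to a vertex of cardinality $s$ forces $rs=k$, and the construction side of Theorem~\ref{T-SSG2} shows that for each divisor pair one genuinely realizable bipartite component exists, so the bound $t\le \#\{\{r,s\}: rs=k\}$ is attained.

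The third step is the matching lower-bound construction: given any list of distinct divisor pairs $\{r_1,s_1\},\ldots,\{r_t,s_t\}$ of $k$, choose pairwise disjoint difference sets and label $t$ disjoint bipartite graphs — one per pair — exactly as in the sufficiency direction of Theorem~\ref{T-SSG2}, using a ground set $\mZ$ with $n\ge k$ large enough to accommodate all the chosen sets injectively. This produces a strongly $k$-uniform modular sumset graph with precisely $t$ components, showing the bound is sharp.

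I expect the main obstacle to be pinning down exactly why distinct components must correspond to \emph{distinct} divisor pairs, as opposed to merely distinct difference-set choices; the cleanest route is probably to phrase "maximum number of components" as the number of distinct divisor pairs realizable under a single injective labeling and to lean on the fact that, as in \cite{GS2}, one reuses difference sets across a bipartition but a new component with a repeated pair $\{r,s\}$ would need a fresh disjoint pair of difference sets that is incompatible with $k$-uniformity unless the divisor pair itself changes — so the counting is genuinely by divisor pairs of $k$.
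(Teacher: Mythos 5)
The paper states this proposition with no proof at all, so there is nothing to compare your argument against; it has to stand on its own. Your first and third steps are sound: the strong condition $|f^+(uv)|=|f(u)|\,|f(v)|=k$ does force the cardinalities of the two ends of every edge to form a divisor pair of $k$, the alternation argument from Theorem~\ref{T-SSG2} does make each connected component carry a single unordered pair $\{r,s\}$ with $rs=k$, and the construction realizing one bipartite component per divisor pair correctly attains the claimed count. The problem is your second step, which you yourself flag as the obstacle and then never close: you assert that the map from components to divisor pairs is injective, but the justification offered (that a repeated pair ``would need a fresh disjoint pair of difference sets that is incompatible with $k$-uniformity'') is not an argument. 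Disjointness of difference sets is only required between \emph{adjacent} vertices, and vertices in different components are never adjacent, so nothing ties the labels used in one component to those used in another beyond the injectivity of $f$ itself, which a large ground set satisfies trivially.

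In fact the injectivity claim is false, which is why you could not prove it. Take $k=6$ and $n$ large, and let $G$ be two disjoint edges $u_1v_1$ and $u_2v_2$ with $f(u_1)=\{0,1\}$, $f(v_1)=\{0,2,4\}$, $f(u_2)=\{10,11\}$, $f(v_2)=\{20,22,24\}$. All four labels are distinct and each edge label has cardinality $2\cdot 3=6$, so this is a strongly $6$-uniform modular sumset labeling with two components carrying the same pair $\{2,3\}$; appending a third component realizing $\{1,6\}$ gives three components, although $6$ has only two divisor pairs. (Even more simply, for $k=1$ any graph with distinct singleton vertex labels is strongly $1$-uniform, with as many components as you like.) So the upper bound you are trying to establish does not hold under the literal reading of the statement, and no repair of your step two can save it; what is really true, and what your argument actually delivers, is that the number of \emph{distinct cardinality patterns} among the components is at most the number of divisor pairs of $k$. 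Proving the proposition as written would require an extra hypothesis forcing distinct components to realize distinct pairs, which neither your proposal nor the paper supplies.
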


\begin{rem}{\rm 
It can be observed that a strongly $k$-uniform modular sumset graph can have a non-bipartite component if and only if $k$ is a perfect square.  More over, a strongly $k$-uniform modular sumset graph $G$ can have at most one non-bipartite component.}
\end{rem}

\subsection{Maximal Modular Sumset Graph}

It can be observed that the maximum value of the set-labeling number of an edge of a modular sumset graph is $n$, the cardinality of the ground set $\mZ$. Hence, we introduce the following notion.

\begin{defn}{\rm 
Let $f:V(G)\to \sP(\mZ)$ be a modular sumset labeling of a given graph $G$. Then, $f$ is said to be a {\em maximal modular sumset labeling} of $G$ if and only if $f^+(E(G))=\{\mZ\}$.}
\end{defn}

In other words, a modular sumset labeling $f:V(G)\to \sP(\mZ)$ of a given graph $G$ is a maximal modular sumset labeling of $G$ if the set-label of every edge of $G$ is the ground set $\mZ$ itself.

What are the conditions required for a graph to admit a maximal modular sumset labeling? Let us proceed to find out the solutions to this problem.

\begin{prop}
The modular sumset labeling $f:V(G)\to \sP(\mZ)$ of a given graph $G$ is a maximal modular sumset labeling of $G$ if and only if for every pair of adjacent vertices $u$ and $v$ of $G$ some or all of the following conditions hold.
\begin{enumerate}[itemsep=0mm]
\item[(i)] $|f(u)|+|f(v)|\ge n$ if $D_{f(u)}\cap D_{f(v)}\ne \emptyset$. The strict inequality hold when $D_{f(u)}$ and $D_{f(v)}$ are arithmetic progressions containing the same elements.
\item[(ii)] $|f(u)|\,|f(v)|\ge n$ if $D_{f(u)}\cap D_{f(v)}= \emptyset$. 
\end{enumerate}
\end{prop}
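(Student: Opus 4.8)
The plan is to localise the statement to a single edge and then split according to whether the two difference sets overlap. First note that $f$ is a maximal modular sumset labeling exactly when $f^+(uv)=\mZ$ for every $uv\in E(G)$, and since $f^+(uv)=f(u)+f(v)\subseteq\mZ$, this is the same as requiring $|f(u)+f(v)|=n$ for every edge. So it suffices to prove, for a fixed edge $uv$, that $|f(u)+f(v)|=n$ is equivalent to the clause of (i)/(ii) appropriate to the status of $D_{f(u)}\cap D_{f(v)}$.

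If $D_{f(u)}\cap D_{f(v)}=\emptyset$, I would invoke the characterisation of strong modular sumset labelings given earlier in this section, which yields $|f(u)+f(v)|=|f(u)|\,|f(v)|$ on this edge. Together with the universal upper bound $|f(u)+f(v)|\le n$ of Theorem~\ref{T-CST2} this gives $|f(u)|\,|f(v)|\le n$, so $|f(u)+f(v)|=n$ holds if and only if $|f(u)|\,|f(v)|=n$, that is, if and only if the inequality $|f(u)|\,|f(v)|\ge n$ of (ii) holds; this case is routine. If instead $D_{f(u)}\cap D_{f(v)}\ne\emptyset$, the sumset genuinely collapses, $|f(u)+f(v)|<|f(u)|\,|f(v)|$, so the product bound is no longer tight and one must control the sumset from below. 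Here I would use the lower bound $|f(u)+f(v)|\ge|f(u)|+|f(v)|-1$ of Theorem~\ref{T-CST}, sharpened by Theorem~\ref{T-SSAPL}, which says that equality in that bound occurs precisely when $f(u)$ and $f(v)$ are arithmetic progressions with a common difference $d$ --- equivalently, when $D_{f(u)}$ and $D_{f(v)}$ are the progressions $\{d,2d,\dots\}$ for that shared $d$, the ``arithmetic progressions containing the same elements'' situation named in (i). In that extremal subcase, $|f(u)+f(v)|=|f(u)|+|f(v)|-1$, so maximality forces $|f(u)|+|f(v)|-1=n$ and hence the strict inequality $|f(u)|+|f(v)|>n$; in the remaining subcases one argues that attaining $n$ elements in the sumset still forces $|f(u)|+|f(v)|\ge n$, which is the weak inequality in (i). For the converse direction of this case, the pigeonhole principle in $\mZ$ --- for any $x\in\mZ$ the sets $f(u)$ and $x-f(v)=\{x-b:b\in f(v)\}$ have more than $n$ elements between them and therefore meet --- shows that $|f(u)|+|f(v)|>n$ already forces $f(u)+f(v)=\mZ$ outright, and the boundary equality $|f(u)|+|f(v)|=n$ is treated with the same arithmetic-progression analysis.

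The step I expect to be the main obstacle is exactly this overlapping-difference-sets case: Theorem~\ref{T-CST} only traps $|f(u)+f(v)|$ between $|f(u)|+|f(v)|-1$ and $|f(u)|\,|f(v)|$, so to extract a clean biconditional one has to quantify how much cancellation $D_{f(u)}\cap D_{f(v)}$ actually produces, and it is there that Theorem~\ref{T-SSAPL} together with the fine structure of the difference sets must be used carefully; by comparison, the disjoint-difference-sets case and the ``$|f(u)|+|f(v)|>n\Rightarrow$ everything'' direction are immediate.
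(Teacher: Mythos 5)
Your proposal follows essentially the same route as the paper: reduce maximality to the per-edge condition $|f^{+}(uv)|=n$, then split into the three cases (identical singleton difference sets, intersecting but distinct difference sets, disjoint difference sets) and apply Theorem~\ref{T-SSAPL}, the additive lower bound, and the multiplicative upper bound respectively. The step you flag as the main obstacle --- bounding $|f(u)+f(v)|$ from below when the difference sets merely intersect --- is exactly the point the paper also passes over by simply asserting $|f(u)+f(v)|\ge |f(u)|+|f(v)|$ there, and your pigeonhole observation for the converse direction is a small supplement that the paper leaves implicit.
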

\begin{proof}
For two adjacent vertices $u$ and $v$ in $G$, let $D_{f(u)}=D_{f(v)}=\{d\}$ are arithmetic progressions containing the same elements. Then, the elements in $f(u)$ and $f(v)$ are also in arithmetic progression, with the same common difference $d$. Then, by Theorem \ref{T-SSAPL}, $|f(u)+f(v))|= |f(u)|+|f(v)|-1$. Therefore, the set-labeling number of the edge $uv$ is $n$ if and only if $|f(u)|+|f(v)|>n$. 

Now, let $D_{f(u)}\cap D_{f(v)}\ne \emptyset$ such that $D_{f(u)}\ne D_{f(v)}$. Then, clearly $|f(u)+f(v))|\ge|f(u)|+|f(v)|$. Therefore, we have $|f^+(uv)|=n$ if and only if $|f(u)|+|f(v)|\ge n$.

Next assume that $D_{f(u)}\cap D_{f(v)}= \emptyset$. Then, $|f(u)+f(v))|=|f(u)|\,|f(v)|$. Therefore, we have $|f^+(uv)|=n$ if and only if $|f(u)|\,|f(v)|\ge n$.
\end{proof}

Figure \ref{fig:G-ESSLG} illustrates a maximal modular sumset labeling of a graph with respect to the ground set $\mathbb{Z}_4$.

\begin{figure}[h!]
\centering
\includegraphics[width=0.55\linewidth]{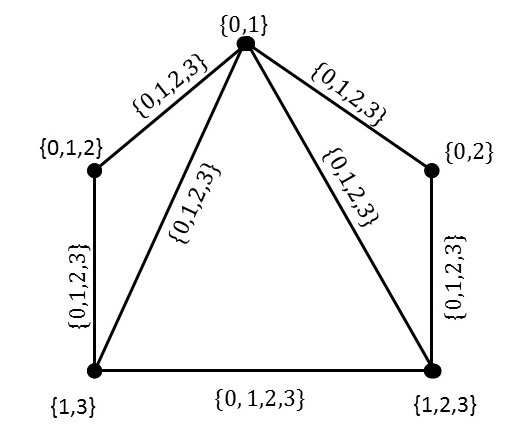}
\caption{An example to a maximal modular sumset graph.}
\label{fig:G-ESSLG}
\end{figure}

The following result explains a necessary and sufficient condition for a weak modular sumset labeling of a given graph $G$ to be a maximal modular sumset labeling of $G$.

\begin{prop}\label{T-WMSL}
A weak modular sumset labeling of a graph $G$ is a maximal labeling of $G$ if and only if $G$ is a star graph. 
\end{prop}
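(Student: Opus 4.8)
The plan is to prove both implications. For the easy direction, suppose $G$ is a star $K_{1,t}$ with centre $u$ and leaves $v_1,\dots,v_t$. I would label the centre with a non-singleton set $f(u)=\mZ$ (the whole ground set) and give each leaf $v_i$ a distinct singleton $f(v_i)=\{a_i\}$. This is a weak modular sumset labeling, since every edge $uv_i$ has a singleton end vertex, so $|f^+(uv_i)|=|f(u)|=n$; moreover $f^+(uv_i)=\mZ+\{a_i\}=\mZ$ because translation by a fixed residue is a bijection of $\mZ$. Hence every edge is labeled by $\mZ$ and the labeling is maximal. (If $t$ is too large to fit $t$ distinct singletons when $n$ is forced small, one simply takes $n$ larger; the statement only asserts existence of such a labeling, so there is freedom in the ground set.)

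For the converse, suppose $f$ is a weak modular sumset labeling of $G$ that is maximal, i.e.\ $f^+(uv)=\mZ$ for every edge $uv$. The key observation is that for a weak labeling, no two adjacent vertices can both have non-singleton set-labels; so along any edge $uv$ at least one end, say $v$, has $|f(v)|=1$. Writing $f(v)=\{c\}$, we get $f^+(uv)=f(u)+\{c\}$, which is a translate of $f(u)$ and therefore has cardinality exactly $|f(u)|$. Since $f^+(uv)=\mZ$ forces $|f^+(uv)|=n$, we deduce $|f(u)|=n$, i.e.\ $f(u)=\mZ$. So for \emph{every} edge, the non-singleton end (if any) must actually be labeled by the full ground set $\mZ$ — and an edge cannot have both ends singleton unless $n=1$ (a monocardinal edge has label a singleton, not $\mZ$, once $n\ge 2$), so exactly one end of each edge is labeled $\mZ$ and the other is a singleton.

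Now I would argue that $G$ can have only one vertex whose label is $\mZ$: since $f$ is injective on vertices, at most one vertex can carry the label $\mZ$. Call it $u$ if it exists. By the previous paragraph, every edge of $G$ is incident with a vertex labeled $\mZ$; since only $u$ is, every edge is incident with $u$. A graph with no isolated vertices in which every edge meets a single fixed vertex is precisely a star with centre $u$. The main thing to check carefully is the degenerate small-$n$ case: the whole setup presumes $n\ge 2$ so that $\mZ$ is genuinely non-singleton and the "weak, hence not both ends non-singleton" dichotomy has teeth; for $n=1$ the ground set has a single element, every subset is $\{0\}$, and no graph with an edge admits an injective vertex labeling at all, so the statement is vacuous there. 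I expect the bookkeeping around "each edge has exactly one $\mZ$-end and one singleton end, and $\mZ$ occurs at most once" to be the only subtle point; everything else is a direct translation of the definitions together with the fact that translation is a bijection of $\mZ$.
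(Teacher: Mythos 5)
Your proposal is correct and follows essentially the same route as the paper: label the centre of the star by $\mZ$ and the leaves by distinct singletons for one direction, and for the converse use weakness plus maximality to force one end of every edge to carry the label $\mZ$, which by injectivity of $f$ can happen at only one vertex. You actually supply details the paper leaves implicit (that a translate of $f(u)$ has the same cardinality, and the injectivity argument pinning down a single $\mZ$-labeled vertex), so no gap to report.
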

\begin{proof}
Let $f$ be a weak modular sumset labeling of  given graph $G$. First, assume that $f$ is a maximal modular sumset labeling of $G$. Then, the set-labeling number of one end vertex of every edge of $G$ is $1$ and the set-labeling number of the other end vertex is $n$. Therefore, $\mZ$ be the set-label of one end vertex of every edge of $G$, which is possible only if $G$ is a star graph with the central vertex has the set-label $\mZ$ and the pendant vertices of $G$ have distinct singleton set-labels.

Conversely, assume that $G$ is a star graph. Label the central vertex of $G$ by the ground set $\mZ$ and label other vertices of $G$ by distinct singleton subsets of $\mZ$. Then, all the edges of $G$ has the set-indexing number $n$. That is, this labeling is a maximal modular sumset labeling of $G$.
\end{proof}

A necessary and sufficient condition for a strong modular sumset labeling of a graph $G$ to be a maximal modular sumset labeling of $G$.

\begin{thm}
Let $f$ be a strong sumset-labeling of a given graph $G$. Then, $f$ is a maximal sumset-labeling of $G$ if and only if $n$ is a perfect square or $G$ is bipartite or a disjoint union of bipartite components. 
\end{thm}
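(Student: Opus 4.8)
The plan is to combine the characterization of strong $k$-uniform modular sumset graphs (Theorem~\ref{T-SSG2} and its companion for disjoint unions of bipartite components) with the observation that a strong modular sumset labeling is maximal precisely when it is strongly $n$-uniform. First I would argue the easy direction: if $f$ is a strong modular sumset labeling of $G$ that is also maximal, then for every edge $uv$ we have $|f^+(uv)| = |f(u)|\,|f(v)| = n$, so $f$ is in fact a strongly $n$-uniform modular sumset labeling of $G$. By the theorem characterizing strongly $k$-uniform modular sumset graphs (with $k = n$), such a labeling exists only if $n$ is a perfect square or $G$ is bipartite or a disjoint union of bipartite components. This settles necessity.

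For sufficiency I would split into cases matching the three alternatives. If $n$ is a perfect square, say $n = l^2$, label every vertex of $G$ by a distinct $l$-element subset of $\mZ$ chosen so that adjacent vertices have disjoint difference sets (this is exactly the construction used in the proof of Theorem~\ref{T-SSG2}); then every edge gets set-labeling number $l^2 = n$, so its set-label, being a subset of $\mZ$ of cardinality $n$, must equal $\mZ$. If $G$ is bipartite with parts $X,Y$, pick divisors $r,s$ of $n$ with $rs = n$ and label $X$ by distinct $r$-element sets with a common difference set $D_X$ and $Y$ by distinct $s$-element sets with a common difference set $D_Y$, with $D_X \cap D_Y = \emptyset$; again each edge receives a set of cardinality $rs = n$, hence equal to $\mZ$. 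The disjoint-union-of-bipartite-components case is handled component by component in the same way. In each case the resulting $f$ is simultaneously a strong modular sumset labeling and a maximal one.

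The one point requiring a little care — and the main obstacle — is guaranteeing that $\mZ$ actually contains enough distinct $l$-element (respectively $r$- and $s$-element) subsets with the prescribed disjointness of difference sets to label all vertices of $G$ injectively. For a fixed graph this is not automatic for small $n$, but since the statement only asserts that $f$ \emph{is} a maximal labeling when one of the three conditions holds (i.e., it characterizes when a strong labeling can be upgraded to a maximal one rather than fixing $n = \sigma(G)$), I would note that enlarging the common difference sets within $\mZ$ and using translates supplies arbitrarily many admissible set-labels whenever $n$ is large enough relative to $|V(G)|$, and that this is the implicit hypothesis throughout this subsection (as in Proposition~\ref{T-WMSL}). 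I would also remark that the strict-versus-non-strict inequality subtleties from the preceding proposition do not arise here, because in every construction the difference sets of adjacent vertices are disjoint, so the bound $|f(u)|\,|f(v)| \ge n$ from part~(ii) is the operative one and it holds with equality by design.
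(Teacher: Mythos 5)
Your proposal is correct and follows essentially the same route as the paper: the paper's entire proof is the one-line observation that the statement is an immediate consequence of Theorem~\ref{T-SSG2} with $k=n$, which is exactly your reduction of maximality of a strong labeling to strong $n$-uniformity. Your additional discussion of the sufficiency constructions and of whether $\mathbb{Z}_n$ supplies enough admissible set-labels is a reasonable elaboration of details the paper leaves implicit, but it does not change the underlying argument.
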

\begin{proof}
The proof is an immediate consequence of Theorem \ref{T-SSG2}, when $k=n$.
\end{proof}

In the coming discussion, we check whether the sumset of two sets can contain both sets and according to that property we define a particular type of modular sumset graphs.

\subsection{Exquisite Modular Sumset Graphs}

Analogous to the exquisite integer additive set-labeling of a graph $G$ defined in \cite{GS3}, let us define the following.

\begin{defn}{\rm
For a positive integer $n$, let $\mZ$ be the set of all non-negative integers modulo $n$ and $\sP(\mZ)$ be its power set. An {\em exquisite modular sumset labeling} (EMSL) is a modular sumset labeling $f:V(G)\to \sP(\mZ)$ with the induced function $f^+:E(G) \to \sP(\mZ)$ defined by $f^+ (uv) = f(u)+ f(v),~ uv\in E(G)$, such that $f(u),f(v)\subseteq f^+(uv)$ for all adjacent vertices $u, v\in V(G)$. A graph which admits an exquisite modular sumset labeling is called an {\em exquisite modular sumset graph}.}
\end{defn}

What is the condition required for a given graph to admit an exquisite modular sumset labeling? The following proposition leads us to a solution to this question.

\begin{prop}\label{P-ESL1}
Let $A$ and $B$ be two subsets of the set $\mZ$. Then, $A$ (or $B$) is a subset of their sumset $A+B$ if and only if every element $a_i$ of $A$ is the sum (modulo $n$)  of an element $a_j$ (not equal to $a_i$) in $A$ and an element $b_l$ in $B$. 
\end{prop}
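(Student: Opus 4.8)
The plan is to prove Proposition~\ref{P-ESL1} by directly unwinding the definition of the modular sumset, $A+B=\{x:\ a+b\equiv x\pmod{n},\ a\in A,\ b\in B\}$. In each direction this is essentially a one-line argument, with the only care needed for the parenthetical clause ``not equal to $a_i$''.

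For the ``if'' direction, suppose every element $a_i\in A$ can be written as $a_i\equiv a_j+b_l\pmod{n}$ for some $a_j\in A$ and $b_l\in B$. Then, directly from the definition of the modular sumset, $a_i\in A+B$; since $a_i$ was arbitrary, $A\subseteq A+B$. The statement with $B$ in place of $A$ follows at once from the symmetry of $A+B$ in its two arguments. For the ``only if'' direction, assume $A\subseteq A+B$ and fix an arbitrary $a_i\in A$. Since $a_i\in A+B$, there exist $a_j\in A$ and $b_l\in B$ with $a_i\equiv a_j+b_l\pmod{n}$, which already yields the claimed representation save possibly for the requirement $a_j\neq a_i$.

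Here lies the one genuinely delicate point, and the step I expect to be the main obstacle. If $a_j=a_i$ in the representation above, then $b_l\equiv 0\pmod{n}$; hence the clause ``$a_j$ not equal to $a_i$'' holds automatically exactly when $0\notin B$, while in degenerate situations --- for instance $A$ a singleton, or $A=\{0,1\}$ and $B=\{0\}$ --- one can have $A\subseteq A+B$ with no such non-trivial representation. I would therefore either state the result without the parenthetical (in which case it is a clean tautological rephrasing of ``$A\subseteq A+B$''), or prove it under the standing assumption $0\notin B$, which is the regime relevant to exquisite modular sumset labelings anyway, so that the parenthetical becomes automatic. Beyond this bookkeeping about representations that use the summand $0$, the argument presents no further difficulty.
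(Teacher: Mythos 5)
Your argument is essentially identical to the paper's: both directions are proved by directly unwinding the definition of the modular sumset, and the paper's own proof likewise produces only ``$a_i=a_j+b_l$ for some $a_j\in A$, $b_l\in B$'' in the converse direction without ever securing $a_j\neq a_i$. Your observation that the parenthetical clause fails in general (e.g.\ $A=\{0,1\}$, $B=\{0\}$) and holds automatically only when $0\notin B$ is a genuine and correct criticism of the statement as written, which the paper silently glosses over.
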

\begin{proof}
Let $A,B\subseteq \mZ$. Assume that every element of $A$ is the sum (modulo $n$) of an element in $A$ and an element $B$. That is, $a_i\in A \implies~ \exists ~ a_j \in A, b_l \in B ~\text{such that}~ a_i=a_j+b_l$. Hence $a_i \in A+B$. Therefore, $A\subseteq A+B$.

Conversely, assume that $A\subseteq A+B$. Then, $a_i \in A \implies a_i \in A+B \implies a_i=a_j+b_l$ for some $a_j\in A, b_l\in B$. 
\end{proof}

\noindent In view of the Proposition \ref{P-ESL1}, we have the following theorem.

\begin{thm}
A graph $G$ admits an exquisite modular sumset labeling if and only if every element in the set-label of any vertex of $G$ is the sum (modulo $n$) of an element in that set-label and an element in the set-label of its adjacent vertex.
\end{thm}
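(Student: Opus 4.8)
The plan is to derive the statement almost verbatim from the definition of an exquisite modular sumset labeling together with Proposition \ref{P-ESL1}, applied edge by edge. First I would recall that, by definition, a modular sumset labeling $f:V(G)\to\sP(\mZ)$ is exquisite precisely when $f(u)\subseteq f^+(uv)=f(u)+f(v)$ and $f(v)\subseteq f^+(uv)=f(u)+f(v)$ for every edge $uv\in E(G)$. So the whole theorem reduces to re-expressing the two containments $f(u)\subseteq f(u)+f(v)$ and $f(v)\subseteq f(u)+f(v)$ in the pointwise language of the statement.

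For the forward direction, I would suppose $f$ is an exquisite modular sumset labeling and fix an arbitrary vertex $w$ and an arbitrary neighbour $w'$ of $w$. Since $ww'\in E(G)$, exquisiteness gives $f(w)\subseteq f(w)+f(w')$. Now apply Proposition \ref{P-ESL1} with $A=f(w)$ and $B=f(w')$: the containment $A\subseteq A+B$ is equivalent to the assertion that every $a_i\in A$ can be written as $a_i\equiv a_j+b_l\pmod n$ for some $a_j\in A$ (with $a_j\neq a_i$) and some $b_l\in B$. Translating back, every element of $f(w)$ is the sum modulo $n$ of another element of $f(w)$ and an element of $f(w')$, which is exactly the condition claimed. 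Since $w$ and its neighbour $w'$ were arbitrary, the condition holds throughout $G$.

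For the converse, I would assume the pointwise condition holds: for every vertex $w$, every neighbour $w'$ of $w$, and every element of $f(w)$, that element is the sum modulo $n$ of an element of $f(w)$ and an element of $f(w')$. Fix an edge $uv$. Applying Proposition \ref{P-ESL1} in the other direction with $A=f(u)$, $B=f(v)$ yields $f(u)\subseteq f(u)+f(v)=f^+(uv)$; applying it again with $A=f(v)$, $B=f(u)$ yields $f(v)\subseteq f^+(uv)$. Hence $f(u),f(v)\subseteq f^+(uv)$ for all adjacent $u,v$, so $f$ is an exquisite modular sumset labeling and $G$ is an exquisite modular sumset graph.

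I do not anticipate a genuine obstacle here: the only point requiring a little care is the "not equal to $a_i$" clause in Proposition \ref{P-ESL1}, which must be carried faithfully through both directions so that the pointwise characterization in the theorem matches the hypothesis of the proposition exactly; everything else is a direct unwinding of definitions.
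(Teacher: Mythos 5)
Your proposal is correct and matches the paper's route exactly: the paper gives no separate argument for this theorem, simply deriving it "in view of" Proposition \ref{P-ESL1} applied to each edge, which is precisely the edge-by-edge unwinding you describe. Your remark about carrying the "$a_j\neq a_i$" clause faithfully is a fair point of care (indeed the theorem's wording drops that clause while the proposition includes it), but it does not affect the overall correctness of the approach.
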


Analogous to the corresponding result of exquisite IASL-graphs, we have the following result for an exquisite modular sumset graphs.

\begin{prop}
A graph admits an exquisite modular sumset labeling $f$ if the set-label of every vertex of $G$ consists of the element $0$.
\end{prop}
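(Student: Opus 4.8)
The plan is to verify directly that any modular sumset labeling $f$ in which $0$ lies in every vertex set-label automatically satisfies the containment condition in the definition of an exquisite modular sumset labeling, and then to observe that such a labeling can always be realised over a suitable ground set $\mZ$.

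First I would fix an arbitrary edge $uv\in E(G)$ and an arbitrary element $a\in f(u)$. Since $0\in f(v)$ by hypothesis, the element $a\equiv a+0\pmod n$ belongs to $f(u)+f(v)=f^+(uv)$. As $a$ was arbitrary, this yields $f(u)\subseteq f^+(uv)$; interchanging the roles of $u$ and $v$ (now using $0\in f(u)$) gives $f(v)\subseteq f^+(uv)$. Hence $f(u),f(v)\subseteq f^+(uv)$ for every pair of adjacent vertices, which is precisely the defining condition of an exquisite modular sumset labeling. This is also exactly the instance of Proposition \ref{P-ESL1} in which the required summand taken from the neighbour's label is $0$.

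To see that the hypothesis can always be met, I would note that for a graph on $m$ vertices it suffices to take $n$ with $2^{n-1}\ge m$, so that $\mZ$ has at least $m$ distinct subsets containing $0$; assigning $m$ distinct such subsets to the vertices gives an injective $f$, i.e.\ a modular sumset labeling, which by the previous paragraph is exquisite.

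There is no real obstacle here: the only points needing a little care are that the addition is modular, so that $a+0$ genuinely equals $a$ in $\mZ$, and that the existence reading of the statement requires the ground set $\mZ$ to be chosen large enough as above; both are routine.
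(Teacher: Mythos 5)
Your proof is correct. The paper in fact states this proposition without any proof (it is presented as an analogue of a known IASL result), and your direct verification --- $a\equiv a+0\pmod{n}$ lies in $f(u)+f(v)$ whenever $0\in f(v)$, and symmetrically --- is exactly the intended argument; the only minor quibble is that it is not literally an instance of Proposition~\ref{P-ESL1}, whose statement insists on a summand $a_j\neq a_i$ (your instance takes $a_j=a_i$), but that restriction is not used in the paper's proof of that proposition and your self-contained argument does not need it. The closing remark on realisability over a sufficiently large $\mZ$ is a harmless extra, since the statement is conditional on such a labeling being given.
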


\begin{prop}
Every maximal modular sumset labeling of a graph $G$ is also an exquisite modular sumset labeling of $G$.
\end{prop}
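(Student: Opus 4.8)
The plan is to read the conclusion directly off the two definitions, so the argument will be very short. Suppose $f:V(G)\to \sP(\mZ)$ is a maximal modular sumset labeling of $G$. By definition this means $f^+(E(G))=\{\mZ\}$, that is, $f^+(uv)=\mZ$ for every edge $uv\in E(G)$. Note that $f$ is already injective as a vertex labeling, since a maximal modular sumset labeling is in particular a modular sumset labeling; hence the only thing that remains to be verified is the exquisite containment condition $f(u),f(v)\subseteq f^+(uv)$ for each edge $uv$.

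First I would fix an arbitrary edge $uv\in E(G)$ and recall that $f(u)$ and $f(v)$ are, by construction, subsets of $\mZ$. Then, since $f^+(uv)=\mZ$ for this edge, the inclusions $f(u)\subseteq \mZ=f^+(uv)$ and $f(v)\subseteq \mZ=f^+(uv)$ hold trivially. As $uv$ was an arbitrary edge, the exquisite condition holds on every edge of $G$, and therefore $f$ is an exquisite modular sumset labeling of $G$.

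The main ``obstacle'' here is essentially that there is none: the statement is an immediate consequence of the definitions, because $\mZ$ is the largest possible set-label and so contains every set-label of every vertex. The only point one must not overlook is that being a modular sumset labeling (the injectivity of $f$ on vertices) is already built into the hypothesis and need not be re-established; everything else is a one-line containment check. One could alternatively invoke Proposition \ref{P-ESL1} to describe the containment element-by-element, but that route is strictly longer and adds nothing, so I would not take it.
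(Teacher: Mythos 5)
Your proposal is correct and follows exactly the paper's own argument: since $f^+(uv)=\mZ$ for every edge and every set-label is by definition a subset of $\mZ$, the containments $f(u),f(v)\subseteq f^+(uv)$ are immediate. No gaps, and no meaningful difference in approach.
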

\begin{proof}
Let $f$ be a maximal sumset-labeling of $G$. Then, $f^+(uv)=\mZ$ for all edge $uv\in E(G)$. Then, the set-labels $f(u)$ and $f(v)$ are the subsets of $f^+(uv)$. Hence, $f$ is an exquisite modular sumset labeling of $G$.
\end{proof}

Invoking the above results, a necessary and sufficient condition for a modular sumset labeling of a graph $G$ to be an exquisite modular sumset labeling of $G$.

\begin{thm}\label{T-ESL1a}
A modular sumset labeling $f:V(G)\to \sP(\mZ)$ of a graph $G$ is an exquisite modular sumset labeling of $G$ if and only if either the set-label of every vertex consists of the element $0$ or $f$ is a maximal modular sumset labeling of $G$.
\end{thm}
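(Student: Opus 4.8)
The plan is to prove the two implications separately, leaning on the results already established in this subsection. The "if" direction is immediate: if every vertex set-label contains $0$, then by the Proposition stating that such a labeling is exquisite, $f$ is an exquisite modular sumset labeling; and if $f$ is a maximal modular sumset labeling, then by the Proposition asserting that every maximal labeling is exquisite, $f$ is again exquisite. So the content lies entirely in the forward direction.

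For the "only if" direction, I would assume $f$ is an exquisite modular sumset labeling and suppose, for contradiction, that neither alternative holds. Then there is some vertex $w$ whose set-label $f(w)$ does not contain $0$, and there is some edge $u_0v_0$ with $f^+(u_0v_0)\neq \mZ$. The key structural fact to extract is a local one: by Theorem \ref{T-ESL1a}'s predecessor (the characterization via "every element of a vertex label is a sum of another element of that label and an element of the neighbor's label"), for each edge $uv$ and each $a\in f(u)$ there exist $a'\in f(u)$ with $a'\neq a$ and $b\in f(v)$ with $a\equiv a'+b\pmod n$. Iterating this within a fixed vertex label $f(u)$ and tracking the elements $b\in f(v)$ that get used, one shows that if $0\notin f(v)$ then the successive "shifts" force $f(v)$ to be large — in fact the orbit structure of adding elements of $f(v)$ to $f(u)$ must exhaust a coset-like structure, pushing $|f^+(uv)|$ up to $n$. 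The cleanest route is probably: show that for an exquisite labeling of an edge $uv$, either $0\in f(v)$ (and symmetrically $0\in f(u)$), or $f^+(uv)=\mZ$. Establishing this dichotomy edge-by-edge, and then propagating the "contains $0$" condition across the (implicitly connected, or component-by-component) graph, yields the global statement.

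The main obstacle I anticipate is making the "shift" argument rigorous in the modular setting. Over the integers one could use an extremality/size argument (take the largest element of $f(u)$ and derive a contradiction), but modulo $n$ there is no largest element, so one must argue via the cyclic group structure: the set $\{a'-a : a,a'\in f(u)\}$ together with $f(v)$ generates shifts, and $A\subseteq A+B$ together with finiteness forces $A+B$ to be a union of cosets of the subgroup generated by $B-B$ and, ultimately when $0\notin B$, to be all of $\mZ$ unless $B$ itself already forces $0$ into play. I would handle this by a careful case analysis on whether $\langle f(v)-f(v)\rangle = \mZ$, combined with Theorem \ref{T-SSAPL} to control the borderline case where difference sets are arithmetic progressions. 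Once this local lemma is in hand, the rest is bookkeeping: an edge either certifies $0\in f(u)\cap f(v)$ or certifies $f^+(uv)=\mZ$, and if this second possibility occurs on even one edge while some vertex lacks $0$, the propagation forces $f^+$ to be constantly $\mZ$ on the whole graph, i.e. $f$ is maximal — which is exactly the stated alternative.
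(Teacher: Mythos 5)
The paper states this theorem without any proof --- it is introduced only by the phrase ``invoking the above results'' --- so there is no argument of the author's to compare yours against. Your ``if'' direction is correct and is exactly what the two preceding propositions supply: if every vertex label contains $0$ the labeling is exquisite, and every maximal labeling is exquisite.

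The ``only if'' direction is where all the content lies, and your plan for it does not go through. You reduce everything to the local dichotomy ``for an exquisitely labeled edge $uv$, either $0\in f(u)$ and $0\in f(v)$, or $f^+(uv)=\mZ$,'' and you yourself flag that the shift/coset argument needed to establish it is not yet rigorous. In fact no such argument can exist, because the dichotomy is false. Take $n=6$, $f(u)=\{2,4\}$ and $f(v)=\{0,2,4\}$. Then $f^+(uv)=\{0,2,4\}$, which contains both $f(u)$ and $f(v)$, so the edge is exquisitely labeled; yet $0\notin f(u)$ and $f^+(uv)\neq\mathbb{Z}_6$. The underlying phenomenon is the one you correctly identified: the condition $A\subseteq A+B$ only forces the stabilized sumset to be a union of cosets of the subgroup generated by the relevant differences, and such a union need not be all of $\mZ$ and need not force $0$ into both summands. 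The same example, placed on a single edge, is in fact a counterexample to the theorem as stated (the labeling is exquisite, not every vertex label contains $0$, and it is not maximal), so the gap in your proposal is not a missing technical step that more care would fill --- the statement you are trying to prove in the forward direction is not true, and any completion of your outline must break down at precisely the point you named as the main obstacle.
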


Figure \ref{fig:G-ESSLG0} illustrates an exquisite modular sumset labeling of a graph, the set-labels of all whose vertices containing $0$. 

\begin{figure}[h!]
\centering
\includegraphics[width=0.5\linewidth]{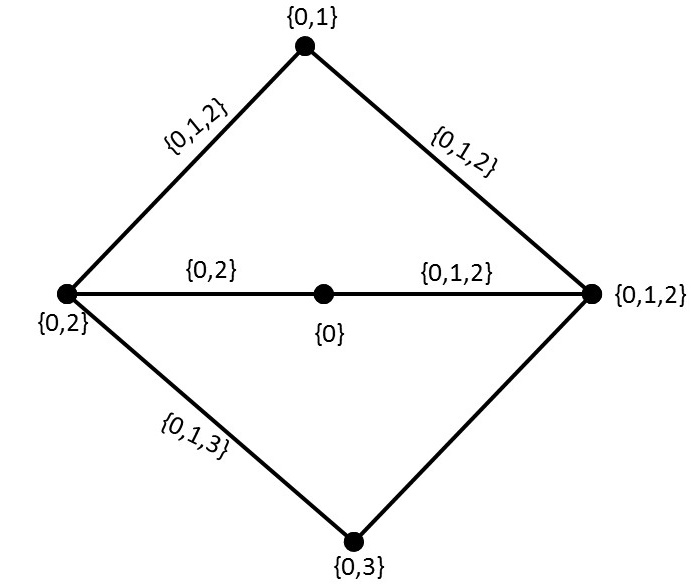}
\caption{An example to a maximal modular sumset graph.}
\label{fig:G-ESSLG0}
\end{figure}

Can a weak modular sumset labeling of a graph $G$ be an exquisite modular sumset labeling of $G$? Let us now proceed to find the solution to this problem.

\begin{thm}
Let $f:V(G)\to \sP(\mZ)$ be a weak modular sumset labeling of a graph $G$. Then, $f$ is an exquisite modular sumset labeling of $G$ if and only if $G$ is a star.
\end{thm}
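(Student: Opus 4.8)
The plan is to prove both implications for the given weak labeling $f$ by feeding it into two results already available: the structural dichotomy for exquisite labelings in Theorem \ref{T-ESL1a} and the characterisation of maximal weak labelings in Proposition \ref{T-WMSL}. The guiding idea is that ``exquisite'' is controlled almost entirely by whether $f$ is maximal or has every vertex-label containing $0$, and each of these alternatives can be pushed onto the shape of $G$.

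For the direction ``$f$ exquisite $\Rightarrow$ $G$ is a star'', I would start from Theorem \ref{T-ESL1a}: since $f$ is exquisite, either $f$ is a maximal modular sumset labeling, or the set-label of every vertex of $G$ contains $0$. If $f$ is maximal, Proposition \ref{T-WMSL} applies directly and gives that $G$ is a star, finishing this case. The remaining case, where every vertex-label contains $0$, is where the weakness hypothesis must be injected. Because $f$ is weak, every edge has a monocardinal endpoint; but a singleton set-label that also contains $0$ can only be $\{0\}$, and by injectivity of $f$ at most one vertex may carry $\{0\}$. Hence $G$ has at most one monocardinal vertex, every edge must be incident with it, and (there being no isolated vertices) $G$ is a star centred at the vertex labelled $\{0\}$.

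For the converse ``$G$ a star $\Rightarrow$ $f$ exquisite'', the point that distinguishes this statement from a mere existence claim is that it must hold for the \emph{given} $f$. I would therefore invoke Proposition \ref{T-WMSL} in its ``$G$ a star $\Rightarrow$ $f$ maximal'' direction: a weak labeling of a star is necessarily maximal, so our $f$ is itself a maximal modular sumset labeling of $G$. The earlier proposition that every maximal modular sumset labeling is exquisite then applies to $f$ and yields that $f$ is exquisite, with no fresh sumset computation required in this half.

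I expect the main obstacle to be the second case of the forward direction, namely when Theorem \ref{T-ESL1a} produces ``every vertex-label contains $0$'' rather than maximality: this case says nothing a priori about the structure of $G$, so the entire burden falls on combining it with weakness. The delicate step is ruling out a configuration with several non-singleton vertices joined to one another; here weakness is exactly what forbids two adjacent non-singleton vertices, forcing every edge through the unique singleton $\{0\}$ and collapsing $G$ to a star. Once that is secured, both implications reduce to a chaining of Theorem \ref{T-ESL1a}, Proposition \ref{T-WMSL}, and the maximal-implies-exquisite proposition.
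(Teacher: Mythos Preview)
Your forward direction (``$f$ exquisite $\Rightarrow$ $G$ is a star'') is correct and is exactly the paper's argument: split via Theorem~\ref{T-ESL1a} into the maximal case (handled by Proposition~\ref{T-WMSL}) and the ``every label contains $0$'' case, then use weakness plus injectivity to force a unique monocardinal vertex $\{0\}$ through which all edges pass.

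The converse, however, has a genuine gap. You invoke Proposition~\ref{T-WMSL} in the direction ``$G$ a star $\Rightarrow$ the given weak $f$ is maximal'', but this implication is false. Take $G=K_{1,2}$ with centre $c$ and leaves $u,v$ over $\mathbb{Z}_5$, and set $f(c)=\{1\}$, $f(u)=\{0,2\}$, $f(v)=\{0,3\}$. This $f$ is a weak modular sumset labeling of a star, yet $f^+(cu)=\{1,3\}\neq\mathbb{Z}_5$, so $f$ is not maximal (and indeed not exquisite, since $f(u)\not\subseteq f^+(cu)$). Proposition~\ref{T-WMSL} only tells you that \emph{some} weak labeling of a star is maximal, not that every one is; its converse proof in the paper merely constructs such a labeling.

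The paper handles this direction differently: it does not argue about the given $f$ at all, but simply exhibits an exquisite weak labeling of the star (centre $\{0\}$, leaves labelled by non-singleton sets containing $0$; or centre $\mZ$, leaves singletons). In other words, the paper treats the converse as an existence statement, whereas you are trying to prove it for the fixed $f$ in the hypothesis---a stronger claim that, as the counterexample shows, is not actually true. To align with the paper you should replace your converse argument by an explicit construction of an exquisite weak labeling on a star.
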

\begin{proof}
Since $f$ is a weak modular sumset labeling of $G$, then at least one end vertex of every edge of $G$ must have singleton set-labels. First, assume that $G$ is a star graph. Label the central vertex by $\{0\}$ and label all pendant vertices by the subsets of $\mZ$ containing $0$. Then, this labeling is an exquisite modular sumset labeling. (Or label the central vertex of $G$ by $\mZ$ and label the pendant vertices of $G$ by distinct singleton subsets of $\mZ$. This labeling is also an exquisite modular sumset labeling of $G$.)

Conversely, assume that $f$ is an exquisite modular sumset labeling of $G$. Then by Theorem \ref{T-ESL1a}, the set-labels of all vertices of $G$ consist of $0$ or $f$ is a maximal modular sumset labeling.  If the set-labels of all vertices of $G$ contain $0$, then $\{0\}$ is the only singleton set that can be the set-label of a vertex of $G$. Therefore, $G$ must be a star in which the set-label of the central vertex is $\{0\}$ and the pendant vertices of $G$ have distinct non-singleton subsets of $\mZ$ as their set-labels. 

If $0$ is not an element of the set-label of every vertex of $G$, then $f$ is a maximal modular sumset labeling of $G$. Then, by Theorem \ref{T-WMSL}, $G$ is a star graph in which the set-label of the central vertex is $\mZ$ and the pendant vertices of $G$ have distinct singleton subsets of $\mZ$ as their set-labels.  This completes the proof.
\end{proof}

\section{Conclusion}

In this paper, we have introduced the notion of modular sumset labeling for given graphs and discussed certain properties and characteristics of modular sumset graphs. More properties and characteristics of various modular sumset graphs, both uniform and non-uniform, are yet to be investigated. Some promising problems in this area are the following.

\begin{prob}{\rm 
Find the minimum cardinality of the ground set $\mZ$ so that the modular sumset labeling $f:V(G)\to \sP(\mZ)$ of a given graph $G$ is a uniform modular sumset labeling of $G$.}
\end{prob}

\begin{prob}{\rm 
Characterise the graphs admitting the modular sumset labelings which are exquisite as well as weak.}
\end{prob}

\begin{prob}{\rm 
Find the minimum cardinality of the ground set $\mZ$ so that the modular sumset labeling $f:V(G)\to \sP(\mZ)$ defined on a given graph $G$ is a modular sumset indexer of $G$}.
\end{prob}

\begin{prob}{\rm 
Verify whether the existence of induced modular sumset labelings for certain graphs associated to the given modular sumset graphs like line graphs, total graphs etc.}
\end{prob}

\begin{prob}{\rm 
Determine the strong modular sumset number of different graph classes.}
\end{prob}

The problems of establishing the necessary and sufficient conditions for various graphs and graph classes to have certain other types of sumset valuations are also open. Studies on those sumset valuations which assign different sets having specific properties, to the elements of a given graph are also noteworthy.

\section*{Acknowledgement}

The author dedicates this paper to his research supervisor Prof. (Sr.) K. A. Germina, honouring her outstanding research career of over two decades.


\begin{thebibliography}{99}

\bibitem {BDA1} B. D. Acharya, {\bf Set valuations and their applications}, MRI Lecture Notes in Applied Mathematics, No.2, The Mehta Research Institute of Mathematics and Mathematical Physics, Allahabad, 1983.

\bibitem{BDA2} B. D. Acharya, {Set-indexers of a graph and set-graceful graphs}, Bull. Allahabad Math. Soc.,  {\bf 16}(2001), 1-23.

\bibitem{TMA} T. M. Apostol, {\bf Introduction to analytic number theory}, Springer-Verlag, New York, 1989.

\bibitem {BM1} J. A. Bondy and U. S. R. Murty, {\bf Graph theory}, Springer, 2008.

\bibitem {BLS} A. Brandst\"{a}dt, V. B. Le and J. P. Spinrad, {\bf Graph classes: A survey}, SIAM, Philadelphia, 1999.

\bibitem{CZ} G. Chartrand and P. Zhang, {\bf Introduction to graph theory}, McGraw-Hill Inc., 2005.

\bibitem{JAG} J. A. Gallian, {\em A dynamic survey of graph labeling}, Elec. J. Combin., DS-6, 2015.

\bibitem{GA} K. A. Germina and T. M. K. Anandavally, {\em Integer additive set-indexers of a graph: Sum square graphs}, J. Combin. Inform. System Sci., {\bf 37}(2-4)(2012), 345-358.

\bibitem{GS1} K. A. Germina and N. K. Sudev, {\em On weakly uniform integer additive set-indexers of graphs}, Int. Math. Forum, {\bf 8}(37-40)(2013), 1827-1834, DOI: 10.12988/imf.2013.310188.

\bibitem{HIS} R. Hammack, W. Imrich and S. Klavzar, {\bf Handbook of product graphs}, CRC Press, Boca Raton, 2011.

\bibitem{FH}  F. Harary, {\bf Graph theory}, Narosa Publications, Delhi, 2001.

\bibitem{KDJ} K. D. Joshi, {\bf Applied discrete structures}, New Age International, New Delhi, 2003.

\bibitem{MBN} M. B. Nathanson, {\bf Additive number theory: Inverse problems \& geometry of sumsets}, Springer, 1996.

\bibitem{GS0} N. K. Sudev and K. A. Germina, {\em On integer additive set-indexers of graphs}, Int. J. Math. Sci. Eng. Appl., {\bf 8}(2)(2014), 11-22.

\bibitem{GS2} N K Sudev and K A Germina,{\em Some new results on strong integer additive set-indexers of graphs}, Discrete Math. Algorithm. Appl., {\bf 7}(1)(2015), 1-11, DOI: 10.1142/S1793830914500657.

\bibitem{GS3} N. K. Sudev and K. A. Germina, {\em The exquisite integer additive set-labeling of graphs}, Int. J. Sci. Res., {\bf 4}(4)(2015), 2858-2862.

\bibitem{EWW} E. W. Weisstein, {\bf CRC concise encyclopaedia of mathematics}, CRC press, 2011.

\bibitem {DBW} D. B. West, {\bf Introduction to graph theory}, Pearson Education Inc., 2001.

\bibitem {GCO} Information system on graph classes and their inclusions, \url{http://www.graphclasses.org/smallgraphs}.

\end{thebibliography}
\end{document}